\documentclass[12pt]{amsart}
\usepackage{mtymath}
\usepackage{iftex}
\usepackage[T1]{fontenc}
\usepackage{fvextra}

\usepackage{xcolor}
\definecolor{darkblue}{rgb}{0.0,0.0,0.65}
\definecolor{darkred}{rgb}{0.68,0.05,0.0}
\definecolor{darkgreen}{rgb}{0.0,0.29,0.29}
\definecolor{darkpurple}{rgb}{0.47,0.09,0.29}
\usepackage{hyperref}
\hypersetup{
   colorlinks = true,
   citecolor  = darkblue,
   linkcolor  = darkred,
   filecolor  = darkblue,
   urlcolor   = darkblue,
}

\newenvironment{extendednumbering}
{%
  \begingroup
  \let\baseequationnumber\theequation
  \renewcommand{\theequation}{\baseequationnumber*}%
}
{%
  \endgroup
}

\title{Powers of the Vandermonde determinant are eventually non-SNP}
\author{Thien Le}
\author{Melanie Weber}
\date{July 23, 2026}
\begin{document}

\begin{abstract}
We prove a conjecture of Monical, Tokcan, and Yong that every fixed
positive power of the Vandermonde determinant is non-SNP in all
sufficiently many variables, where a polynomial is
non-SNP if there is a lattice point in its Newton polytope that does not
appear with nonzero coefficient.  This means our result proves that for
every even power $k\geq4$, there is always such a missing lattice
monomial in large enough dimensions.  The odd case follows from
alternation, and the quadratic case was previously known.  For every
even power $k\geq4$, we exhibit an explicit lattice point in the Newton
polytope of
$a_{\delta_k}^k$ whose coefficient vanishes.  The vanishing is obtained
from a Dyson constant-term identity, proved using the finite-variable
Jack scalar product and Macdonald's specialization formula.  The key
even-power construction and proof strategy arose from prompting with
OpenAI Codex (GPT Sol 5.6 Extra High), a large language model; the
complete transcript appears in the appendix.
The authors subsequently checked and organized the argument.  The
accompanying Lean formalization is available at\newline
\url{https://github.com/steven-le-thien/vandermonde-snp}.
\end{abstract}

\maketitle

\section{Introduction}
Let $\mathbb{N}$ be the set of nonnegative integers. For a polynomial
$f(x)$ and an exponent $\alpha \in \mathbb{N}^n$, we write
$[x^\alpha]f$ for the coefficient of $x^\alpha$ in the monomial expansion
of $f$; this coefficient may be zero.

\subsection{Newton polytopes and SNP}
Given a polynomial
$f(x)=\sum_{\alpha\in\mathbb{N}^n}c_\alpha x^\alpha$, its
\emph{Newton polytope} is the convex hull of the exponents having nonzero
coefficient:
\begin{equation}
\Newton(f)=\conv\bigl(\{\alpha : c_{\alpha}\neq 0\}\bigr)\subseteq\RR^n.
\end{equation}

\begin{definition}
The polynomial $f$ has a \emph{saturated Newton polytope (SNP)} if
\[
    c_\alpha\neq 0
    \qquad\text{for every }\alpha\in\Newton(f)\cap\mathbb{N}^n.
\]
It is \emph{non-SNP} if there exists
$\alpha\in\Newton(f)\cap\mathbb{N}^n$ with $c_\alpha=0$
\cite[Definition~1.1]{MonicalTokcanYong2019}.
\end{definition}

This condition is useful in complexity theory because it links the geometric
description of a polynomial to the hardness of its coefficient structure.
Adve, Robichaux, and Yong develop this viewpoint for Schubert polynomials:
a tableau criterion based on the Schubitope gives the first polynomial-time
algorithm for deciding coefficient nonvanishing
\cite{AdveRobichauxYong2019Complexity}.  Their full treatment also proves
that computing a Schubert coefficient exactly is \#P-complete, despite the
tractability of deciding whether it vanishes
\cite{AdveRobichauxYong2021Vanishing}.
Newton polytopes can often be described combinatorially, while exact support
problems---deciding which coefficients vanish in high-dimensional expansions---
can be computationally expensive for general families.  When a family is
SNP, geometric membership in $\Newton(f)$ becomes enough to recover support,
which can significantly simplify symbolic tasks such as sparse interpolation,
support reconstruction, and zero-testing for structured circuits; proving that
a family is non-SNP identifies explicit obstructions to this simplification and
helps separate regimes where geometric approximations of complexity are sharp
from those where hidden cancellations remain.  As an example from complexity
theory, many coefficient-extraction tasks are known to be hard via reductions
to the permanent problem \cite{Valiant1979Complexity}.

\subsection{Vandermonde determinant}
The \emph{Vandermonde determinant} in $n$ variables, denoted by
$a_{\delta_n}$ in \cite{MonicalTokcanYong2019}, is
\begin{equation}
    a_{\delta_n}(x):=\prod_{1\leq i<j\leq n}(x_i-x_j),
    \qquad \delta_n=(n-1,n-2,\ldots,0).
\end{equation}

Monical, Tokcan, and Yong proved that $a_{\delta_n}$ is SNP if and only if
$n\leq2$ and that $a_{\delta_n}^2$, the discriminant, is SNP if and only
if $n\leq4$ \cite[Propositions~2.22 and~2.23]{MonicalTokcanYong2019}.
They further conjectured that every fixed positive power of the
Vandermonde determinant is eventually non-SNP
\cite[Conjecture~2.25]{MonicalTokcanYong2019}.  If true, it would give a
uniform asymptotic description of support behavior for all fixed powers;
if false, an eventual counterexample would pinpoint a new geometric
obstruction in Newton-polytope saturation and clarify how SNP phenomena
break down in larger rank.

Coefficient expansions of powers of the Vandermonde determinant have been
studied from several perspectives
\cite{ScharfThibonWybourne1994,Ballantine2012,BelbachirBoussicaultLuque2008}.
Dyson-type constant-term orthogonality identities also form a substantial
related literature \cite{Kadell2000,KarolyiLascouxWarnaar2015}.  The proof
below uses the finite-variable Jack scalar product to establish the
particular constant-term identity needed here.

We prove the conjecture.
\begin{theorem}\label{thm:main}
For every integer $k\geq1$, there is an integer $N_k$ such that
$a_{\delta_n}^k$ is non-SNP for every $n\geq N_k$.
\end{theorem}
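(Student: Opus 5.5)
The plan is to treat the cases $k$ odd, $k=2$, and $k\ge4$ even separately. The main work is the even case $k\ge4$, where I would construct an explicit lattice point with vanishing coefficient.

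\emph{Odd $k$.} Here $a_{\delta_n}^k$ is alternating. Any monomial $x^\alpha$ with two equal exponents $\alpha_i=\alpha_j$ is fixed by the transposition $(i\,j)$, while alternation forces its coefficient to change sign, so the coefficient is $0$. The Newton polytope is invariant under $S_n$, so it contains the barycenter of the orbit of $k\delta_n$. That point is the constant vector $\tfrac{k(n-1)}{2}(1,\dots,1)$, which is a lattice point once $k(n-1)$ is even, that is, when $n$ is odd. For $n$ even, I would instead average over the subgroup fixing two coordinates and adjust, producing a lattice point in the polytope with two equal coordinates. So every odd power is non-SNP for $n\ge3$.

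\emph{$k=2$.} This case is \cite[Proposition~2.23]{MonicalTokcanYong2019}. I would also check that non-SNP propagates to all larger $n$. The idea is to append a variable: if $\beta$ is a missing lattice point for $n$ variables, I would try $(\beta+k\cdot\mathbf 1,0)$ or a similar shifted point. The coefficient of $x_{n+1}^0$ in $a_{\delta_{n+1}}^k$ is $a_{\delta_n}^k\prod_i x_i^k$, so the coefficient vanishes. The Newton polytope of this $x_{n+1}$-degree-$0$ slice is a face of the bigger polytope, so lattice-point membership transfers. This gives a general inheritance lemma: once $a_{\delta_n}^k$ is non-SNP, so is $a_{\delta_{n+1}}^k$. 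It reduces the theorem to finding a single $n$ for each even $k$.

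\emph{Even $k=2m\ge4$.} The plan is to write the coefficient as a constant term,
\[
[x^\alpha]a_{\delta_n}^{2m}=\pm\,\mathrm{CT}\,x^{-\alpha+m(n-1)\mathbf 1}\prod_{i\ne j}(1-x_i/x_j)^m ,
\]
which is a Laurent coefficient of the Dyson product with all parameters equal to $m$. That Dyson product is the weight of the finite-variable Jack scalar product with $\alpha$-parameter $1/m$. Choosing $x^\gamma$ as a difference of suitable monomials turns the constant term into $\langle p,q\rangle'_{1/m}$. For instance, I would try $\gamma$ supported on two coordinates, $\gamma=(r,-r,0,\ldots,0)$ or $\gamma=(r,r,-r,-r,0,\ldots)$ with $r>m$. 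Expanding the monomials in Jack polynomials $P_\lambda$, orthogonality leaves a finite sum of norms $\langle P_\lambda,P_\lambda\rangle'$, and Macdonald's specialization formula gives these as explicit products. The goal is a closed form for this constant term, as a rational function of $n$ and $m$, that has a factor vanishing for a specific $n$, or identically for $n$ large relative to $m$. Finally, I would check that the corresponding $\alpha=m(n-1)\mathbf 1-\gamma$ lies in $\Newton(a_{\delta_n}^{k})$, which is $k$ times the permutohedron of $\delta_n$ (the polytope of $a_{\delta_n}$ is the permutohedron). This requires the majorization condition: $\alpha$ is a lattice point dominated by $k\delta_n$ with the right sum. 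For $\gamma$ with small entries this holds easily once $n$ is large.

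\emph{Main obstacle.} I expect the hard step to be finding the right $\gamma$ and evaluating the constant term in closed form so that it vanishes. The difficulty is that the monomial-to-Jack expansion coefficients are messy, and two-variable supports may need a three-term combination. My first attempt would be $\gamma=(m+1)(e_1-e_2)$ or similar small cases, computed numerically for $m=2,3$ to guess the pattern before proving it with the Jack scalar product.
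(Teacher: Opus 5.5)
Your reduction is sound as far as it goes. Alternation settles odd $k$: for odd $n$ via the barycenter, while for even $n$ your ``adjust'' step is only sketched, but the point $((3k-1)/2,(3k-1)/2,1)$ at $n=3$ plus propagation suffices. The case $k=2$ is \cite[Proposition~2.23]{MonicalTokcanYong2019}. Your inheritance lemma via the face $\{y_{n+1}=0\}$, sending a hole $\beta$ to $(\beta+k\mathbf 1,0)$, is correct; it is the mirror image of \cite[Lemma~2.24]{MonicalTokcanYong2019}, which uses $x_{n+1}$-degree $kn$.

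The genuine gap is the even case $k=2m\ge4$, which is the entire new content of the theorem. You give neither an exponent nor a vanishing argument, only a search plan, and the plan contains no mechanism that forces a zero. Pairing a near-central $x^\gamma$ against the full $n$-variable Dyson weight and expanding in the Jack basis requires the monomial-to-Jack transition coefficients of both sides. A sum of such products weighted by positive norms has no built-in reason to vanish. Already for $m=1$, your first candidate gives $[x_1^{r}x_2^{-r}]D_{n,1}=-(n-r)(n-2)!\neq0$ for every $1\le r\le n-1$, i.e.\ throughout the polytope.

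The idea you are missing is to choose the exponent so that the constant term becomes a pairing against a \emph{single} Jack polynomial. The paper takes $N=k$ variables and $\alpha=(H,H,L,\ldots,L)$ with $H=(N-1)^2$ and $L=m(N-3)+1$.

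\begin{itemize}
\item Since $Q(u)=\prod_i(u-z_i)^N$ has degree exactly $H-1$, extracting $x_1^Hx_2^H$ from $(x_1-x_2)^NQ(x_1)Q(x_2)$ leaves a symmetric $R$ that is homogeneous of degree exactly $s=N-2$.
\item Using $a_{\delta_s}^{2m}=\pm e_s^{m(s-1)}D_{s,m}$, the choice of $L$ turns the remaining coefficient into $\pm\operatorname{CT}(D_{s,m}R/e_s)$. This is a scalar product with $e_s=P^{(1/m)}_{(1^s)}$.
\item Hence only the coefficient $c_{(1^s)}$ of $R$ in the Jack basis matters. It is read off by the specialization $p_j\mapsto-1/m$, which kills every other $P^{(1/m)}_\lambda$ with $\lambda\vdash s$: in Macdonald's formula the cell $(1,2)$ has numerator $-\tau+\tau=0$.
\item Finally, $\epsilon_{-1/m}(E_r)=(-1)^r(r+1)$ gives $\epsilon_{-1/m}(R)\propto(1-1)^{N-2}=0$.
\end{itemize}

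Your second guess $(r,r,-r,-r)$ with $r=3$, $n=4$ does coincide, up to permuting coordinates, with this hole $(9,9,3,3)$ when $m=2$. However, the right generalization is not a small $\gamma$ in many variables. For $m\ge3$ the centered hole is $((m-1)q,(m-1)q,-q,\ldots,-q)$ with $q=2m-1$, which spreads over all $2m$ coordinates and is not a small perturbation of the centre.
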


\section{Key tools}
\subsection{Majorization criterion}
The Newton polytope of the Vandermonde determinant is the permutahedron
\cite[Proposition~2.3]{postnikov2005permutohedraassociahedra}:
\[
    \Newton(a_{\delta_n})=\mathcal{P}(n-1,n-2,\ldots,0)\subset\mathbb{R}^n,
\]
where, for $\beta\in\mathbb{N}^n$,
\begin{equation}
    \mathcal{P}(\beta)=\conv\{\pi\beta:\pi\in\mathcal{S}_n\}
    \subset\mathbb{R}^n.
\end{equation}
Here $\mathcal{S}_n$ is the symmetric group.  Since
$\Newton(fg)=\Newton(f)+\Newton(g)$
\cite[equation~(1.6)]{sturmfels1998polynomialEA}, it follows that
\[
    \Newton(a_{\delta_n}^k)=k\mathcal{P}(n-1,\ldots,0).
\]

For $\alpha,\beta\in\mathbb{N}^n$, let $\alpha^\downarrow$ and
$\beta^\downarrow$ denote their decreasing rearrangements.  We say that
$\beta$ majorizes $\alpha$, and write $\alpha\preceq\beta$, if
\begin{equation}
    \sum_{i=1}^r\alpha_i^\downarrow
    \leq\sum_{i=1}^r\beta_i^\downarrow
    \quad(1\leq r<n),
    \qquad
    \sum_{i=1}^n\alpha_i^\downarrow
    =\sum_{i=1}^n\beta_i^\downarrow.
    \label{eqn:majorization}
\end{equation}
Rado's theorem \cite[Theorem~1]{rado1952inequality} gives
\begin{equation}
    \alpha\in k\mathcal{P}(n-1,\ldots,0)
    \quad\Longleftrightarrow\quad
    \alpha\preceq k(n-1,\ldots,0).
\end{equation}

\subsection{Specialization map}
Let $\Lambda_{\mathbb{Q}}$ be the ring of symmetric functions over
$\mathbb{Q}$.  The power-sum symmetric functions
$p_j=\sum_i x_i^j$ generate
$\Lambda_{\mathbb{Q}}=\mathbb{Q}[p_1,p_2,\ldots]$
\cite[Chapter~I, \S2, equation~(2.12)]{macdonald1995symmetric}.  For a
scalar $c$, define the specialization
\begin{equation}
    \epsilon_c:\Lambda_{\mathbb{Q}}\longrightarrow\mathbb{Q}(c),
    \qquad \epsilon_c(p_j)=c\quad(j\geq1).
\end{equation}
When $c=q$ is a nonnegative integer,
\[
    \epsilon_q(f)=f(\underbrace{1,\ldots,1}_{q\text{ times}},0,0,\ldots),
\]
but the algebraic specialization is also defined for negative and
nonintegral $c$.  We will use Macdonald's specialization formula for Jack
polynomials below.  We write $f(Z)$ for evaluation at
$Z=(z_1,\ldots,z_s,0,0,\ldots)$.

\subsection{Finite-variable Jack polynomials}
For a partition $\lambda=(\lambda_1\geq\cdots\geq\lambda_\ell>0)$,
write $\ell(\lambda)=\ell$ for its length and write $\lambda\vdash r$
when $\sum_i\lambda_i=r$.  Let $m_\lambda$ denote the monomial
symmetric function obtained by summing the distinct monomials whose
exponent vectors are permutations of $\lambda$ with trailing zeros.
Fix positive integers
$s,m$ and set $\tau=1/m$.  We write $P_\lambda^{(\tau)}$ for the Jack
symmetric function with parameter $\tau$ in Macdonald's monic
$P$-normalization: its coefficient of $m_\lambda$ is $1$, and its
remaining monomial terms are indexed by partitions dominated by
$\lambda$
\cite[Chapter~VI, \S10, equations~(10.13)--(10.14)]{macdonald1995symmetric}.

For a Laurent polynomial $G$ in $z_1,\ldots,z_s$, define
\[
    \CT(G):=[z_1^0\cdots z_s^0]G,
\]
and write $Z^{-1}=(z_1^{-1},\ldots,z_s^{-1})$.  We use only two standard
facts about Jack functions.  First, the evaluated polynomials
$P_\lambda^{(\tau)}(Z)$ with $\ell(\lambda)\leq s$ form an orthogonal
basis for the finite-variable scalar product on symmetric polynomials
$g,h$ in $Z$
\cite[Chapter~VI, \S10, equations~(10.34)--(10.36)]{macdonald1995symmetric}:
\begin{equation}
\label{eq:jack-scalar}
\langle g,h\rangle'_{s,\tau}
=\frac1{s!}\CT\left(
g(Z)h(Z^{-1})
\prod_{i\neq j}\left(1-\frac{z_i}{z_j}\right)^{1/\tau}
\right).
\end{equation}
The monic triangular normalization also gives
$P_{(1^s)}^{(\tau)}=m_{(1^s)}=e_s$, where $(1^s)$ is the partition
with $s$ parts equal to $1$ and $e_s$ is the $s$th elementary
symmetric function.  In particular,
$e_s(Z)=z_1\cdots z_s$.  Since $\tau=1/m$, the exponent $1/\tau$ in
\eqref{eq:jack-scalar} is the positive integer $m$.

Second, the specialization map from the preceding subsection satisfies
Macdonald's formula.  Let $\lambda'$ denote the conjugate partition, so
that $\lambda'_j$ is the number of cells in column $j$ of the Young
diagram of $\lambda$.  For a cell $u=(i,j)$ in that diagram, set
\[
    a(u)=\lambda_i-j,\qquad
    \ell(u)=\lambda'_j-i,\qquad
    a'(u)=j-1,\qquad
    \ell'(u)=i-1;
\]
these are respectively the arm, leg, coarm, and coleg of $u$.  Then
\begin{equation}
\label{eq:jack-specialization}
\epsilon_{-\tau}\left(P_\lambda^{(\tau)}\right)
=
\prod_{u\in\lambda}
\frac{-\tau+\tau a'(u)-\ell'(u)}
     {\tau a(u)+\ell(u)+1},
\end{equation}
where the product is over the cells $u$ of $\lambda$
\cite[Chapter~VI, \S10, equation~(10.20)]{macdonald1995symmetric}.

\subsection{A Dyson constant-term identity}

For integers $s,m\geq1$, define the Laurent polynomial
\begin{equation}
    D_{s,m}(Z) := \prod_{i \neq j} (1 - z_i/z_j)^m,
\end{equation}

Pairing the factors indexed by $(i,j)$ and $(j,i)$ gives
\begin{equation}\label{eq:even_power_vandermonde_as_dyson}
    a_{\delta_s}(Z)^{2m}
=
(-1)^{m\binom{s}{2}}
e_s(Z)^{m(s-1)}D_{s,m}(Z).
\end{equation}

\begin{lemma}[Dyson constant-term lemma]\label{lem:dyson}
Let $f\in\Lambda_{\mathbb{Q}}$ be homogeneous of degree $s$, and write
$(a)_s:=a(a+1)\cdots(a+s-1)$ for the rising factorial.  Then
    \begin{equation}
        \frac{\CT\!\left(D_{s,m}(Z)f(Z)/e_s(Z)\right)}
             {\CT(D_{s,m}(Z))}
        =
        \frac{(-1)^s s!}{(1/m)_s}\epsilon_{-1/m}(f),
        \label{eq:dyson-lemma}
    \end{equation}
\end{lemma}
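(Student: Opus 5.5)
Both sides of \eqref{eq:dyson-lemma} are linear in $f$, so I will verify the identity on a basis of the degree-$s$ homogeneous component of $\Lambda_{\mathbb{Q}}$. The natural basis is $\{P_\lambda^{(\tau)}:\lambda\vdash s\}$ with $\tau=1/m$.

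For the left side, I set $Z$ with $s$ variables and note $1/e_s(Z)=e_s(Z^{-1})=P_{(1^s)}^{(\tau)}(Z^{-1})$. Then
\[
\CT\!\left(D_{s,m}(Z)P_\lambda^{(\tau)}(Z)/e_s(Z)\right)
= s!\,\langle P_\lambda^{(\tau)},P_{(1^s)}^{(\tau)}\rangle'_{s,\tau},
\]
because the weight in \eqref{eq:jack-scalar} is exactly $D_{s,m}$ when $1/\tau=m$. If $\ell(\lambda)>s$, then $P_\lambda^{(\tau)}(Z)=0$ (all its monomials have more than $s$ nonzero exponents), so the left side vanishes. Otherwise orthogonality kills every $\lambda\neq(1^s)$. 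Since $\lambda\vdash s$ and $\ell(\lambda)\le s$, the only surviving term is $\lambda=(1^s)$. There the quotient is
\[
\frac{\langle e_s,e_s\rangle'}{\langle 1,1\rangle'}
=\frac{\CT(D_{s,m}\cdot e_s(Z)e_s(Z^{-1}))}{\CT(D_{s,m})}=1,
\]
since $e_s(Z)e_s(Z^{-1})=1$. So the left side equals $1$ for $f=P_{(1^s)}^{(\tau)}$ and $0$ for every other $P_\lambda^{(\tau)}$, $\lambda\vdash s$.

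For the right side, I apply \eqref{eq:jack-specialization}. For any $\lambda\vdash s$ with $\lambda\neq(1^s)$, the diagram contains the cell $u=(1,2)$, which has $a'(u)=1$ and $\ell'(u)=0$. Its numerator factor is $-\tau+\tau-0=0$, so $\epsilon_{-\tau}(P_\lambda^{(\tau)})=0$. For $\lambda=(1^s)$ the cells are $u=(i,1)$ with $a=0$, $\ell=s-i$, $a'=0$, $\ell'=i-1$. The product is
\[
\prod_{i=1}^s\frac{-\tau-(i-1)}{s-i+1}=\frac{(-1)^s(\tau)_s}{s!}.
\]
Multiplying by $(-1)^s s!/(\tau)_s$ gives exactly $1$. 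Both sides therefore agree on the basis, and by linearity the lemma follows.

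The main point to check carefully is the normalization in the orthogonality step. Only the ratio of scalar products enters, and it is normalized by $\langle 1,1\rangle'$. So I must confirm that the $1/s!$ factors cancel, and that evaluating a symmetric function at $Z$ and then at $Z^{-1}$ is compatible with the scalar product as stated. I must also confirm that $\CT(D_{s,m})\ne 0$; this holds because it is the Dyson constant $(sm)!/(m!)^s$, or more simply because the scalar product is positive definite. The remaining steps are routine.
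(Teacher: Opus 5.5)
Your proposal is correct and follows essentially the same route as the paper. Both arguments use orthogonality of the finite-variable Jack polynomials to isolate the $(1^s)$ component, and then Macdonald's specialization formula to kill every $\lambda\neq(1^s)$ via the cell $(1,2)$ and to evaluate $\epsilon_{-\tau}(P^{(\tau)}_{(1^s)})=(-1)^s(\tau)_s/s!$. The differences are cosmetic. You check both sides on the basis elements rather than expanding $f$ and computing $c_{(1^s)}$ two ways. You also justify $\operatorname{CT}(D_{s,m})\neq0$ via Dyson's closed form $(sm)!/(m!)^s$ or positive-definiteness, where the paper uses torus-integral positivity.
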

We postpone the proof of the lemma to a later section.

\section{Known results}
If $a_{\delta_n}^k$ is non-SNP, then $a_{\delta_{n+1}}^k$ is also
non-SNP \cite[Lemma~2.24]{MonicalTokcanYong2019}.

For the first four odd powers, Monical, Tokcan, and Yong record the same
three-variable threshold and witness used below
\cite[Example~2.21]{MonicalTokcanYong2019}.  Through personal
communication, we believe that Yan X.\ Zhang was the first to observe
that the odd case is immediate from skew-symmetry.  We include a proof
below for completeness.
\begin{fact}
For every odd positive integer $k$, $a_{\delta_3}^k$ is non-SNP.  In
particular, the exponent
\[
    \alpha=\left(\frac{3k-1}{2},\frac{3k-1}{2},1\right)
\]
lies in its Newton polytope and has coefficient zero.
\end{fact}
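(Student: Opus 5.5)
The plan has two parts: first check that $\alpha$ lies in the Newton polytope using Rado's criterion, then show its coefficient vanishes by a skew-symmetry argument, since $k$ odd makes $a_{\delta_3}^k$ alternating.

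\emph{Membership.} Put $k=2j+1$. Then $\frac{3k-1}{2}=3j+1$, so $\alpha=(3j+1,3j+1,1)$ with $|\alpha|=6j+3=3k$. The target vector is $k(2,1,0)=(4j+2,2j+1,0)$, also of total $3k$. The two majorization inequalities are
\[
3j+1\le 4j+2
\qquad\text{and}\qquad
6j+2\le 6j+3 .
\]
Both hold for all $j\ge 0$. By Rado's theorem, $\alpha\in k\mathcal{P}(2,1,0)=\Newton(a_{\delta_3}^k)$.

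\emph{Vanishing.} Let $\sigma$ be the transposition swapping $x_1$ and $x_2$. The Vandermonde determinant is alternating, so $\sigma a_{\delta_3}=-a_{\delta_3}$. Since $k$ is odd, $\sigma(a_{\delta_3}^k)=-a_{\delta_3}^k$. Comparing coefficients of monomials, this says
\[
[x^{\sigma\beta}]\,a_{\delta_3}^k=-[x^\beta]\,a_{\delta_3}^k
\qquad\text{for every exponent }\beta .
\]
Our $\alpha$ has equal first two entries, so $\sigma\alpha=\alpha$. Hence $[x^\alpha]a_{\delta_3}^k=-[x^\alpha]a_{\delta_3}^k$, and over $\mathbb{Q}$ this forces the coefficient to be $0$. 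The exponent $\alpha$ is therefore a lattice point of the Newton polytope with zero coefficient, so $a_{\delta_3}^k$ is non-SNP.

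There is no real obstacle. The only point needing care is that $\alpha$ is a genuine lattice point: $\frac{3k-1}{2}$ is an integer exactly because $k$ is odd. One may also note, as a consistency check, that the argument shows more generally that any exponent with a repeated entry has coefficient zero in an odd power.
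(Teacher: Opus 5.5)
Your proposal is correct and follows essentially the same route as the paper: you check membership in $k\mathcal{P}(2,1,0)$ via the majorization inequalities, then kill the coefficient with the transposition $x_1\leftrightarrow x_2$, which fixes $x^\alpha$ but negates the alternating odd power $a_{\delta_3}^k$. Writing $k=2j+1$ is only a cosmetic difference, and it does make the integrality of $\alpha$ explicit.
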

\begin{proof}
The vector $\alpha$ is nonincreasing.  Its coordinates sum to
$3k=k(2+1+0)$, and
\[
    \alpha_1+\alpha_2=3k-1\leq3k,
    \qquad
    \alpha_1=\frac{3k-1}{2}\leq2k.
\]
The majorization criterion \eqref{eqn:majorization} therefore gives
$\alpha\in\Newton(a_{\delta_3}^k)$.

Because $k$ is odd, $a_{\delta_3}^k$ is skew-symmetric.  Swapping $x_1$
and $x_2$ negates the polynomial but fixes the monomial $x^\alpha$,
since $\alpha_1=\alpha_2$.  Hence $[x^\alpha]a_{\delta_3}^k=0$.
\end{proof}

We also recall the quadratic case, which is not covered by the even-power
construction below.
\begin{fact}
$a_{\delta_n}^2$ is SNP if and only if $n\leq4$
\cite[Proposition~2.23]{MonicalTokcanYong2019}.
\end{fact}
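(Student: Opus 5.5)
Since this Fact is quoted from \cite[Proposition~2.23]{MonicalTokcanYong2019}, the plan is to re-derive it by a finite verification for small $n$ and a single witness at $n=5$, then propagate upward.

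\emph{Combinatorial formula.} Expanding both factors of $a_{\delta_n}^2$ as alternants gives
\[
    [x^\alpha]\,a_{\delta_n}^2=\sum_{\substack{\sigma,\pi\in\mathcal{S}_n\\ \sigma\delta_n+\pi\delta_n=\alpha}}\operatorname{sgn}(\sigma)\operatorname{sgn}(\pi).
\]
Since $a_{\delta_n}^2$ is symmetric, its coefficients are constant on $\mathcal{S}_n$-orbits. It therefore suffices to treat nonincreasing $\alpha$.

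\emph{Direction $n\le4$.} By the majorization criterion \eqref{eqn:majorization} and Rado's theorem, the lattice points of $\Newton(a_{\delta_n}^2)=2\mathcal{P}(n-1,\ldots,0)$, up to symmetry, are exactly the partitions $\alpha$ with at most $n$ parts (padded with zeros) satisfying $|\alpha|=n(n-1)$ and $\alpha\preceq 2\delta_n$.
\begin{itemize}
\item For $n=1,2$ the check is immediate.
\item For $n=3$, the points are the partitions of $6$ into at most three parts dominated by $(4,2,0)$, namely $(4,2,0)$, $(4,1,1)$, $(3,3,0)$, $(3,2,1)$ and $(2,2,2)$.
\item For $n=4$, they are the partitions of $12$ into at most four parts dominated by $(6,4,2,0)$, a short list.
\end{itemize}
For each such $\alpha$ I would evaluate the signed sum, either by hand or with a small exact computation of $a_{\delta_n}^2$. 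The goal is to confirm that no cancellation to zero occurs. This is routine but must be exhaustive.

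\emph{Direction $n\ge5$.} By \cite[Lemma~2.24]{MonicalTokcanYong2019}, non-SNP at $n$ implies non-SNP at $n+1$. It is therefore enough to show that $a_{\delta_5}^2$ is non-SNP. Here $2\delta_5=(8,6,4,2,0)$ has degree $20$. I would enumerate the partitions $\alpha$ of $20$ with at most five parts satisfying $\alpha\preceq(8,6,4,2,0)$, compute each coefficient $[x^\alpha]a_{\delta_5}^2$ via the signed double-permutation sum (at most $120^2$ pairs), and extract one $\alpha$ with coefficient $0$. That $\alpha$ is the witness, and its membership in the Newton polytope is certified by checking the four partial-sum inequalities of \eqref{eqn:majorization}.

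The main obstacle is locating and certifying this $n=5$ witness. The vanishing is a genuine cancellation rather than a symmetry argument: unlike the odd case, $a_{\delta_n}^2$ is symmetric, so coordinate coincidences in $\alpha$ do not force a zero. Finding the witness therefore requires an exact computation. To make the certificate checkable by hand, I would first try candidates with many repeated parts, since their sets of contributing pairs $(\sigma,\pi)$ are small and structured. For such a candidate, I would present the contributing pairs grouped into sign-cancelling classes.
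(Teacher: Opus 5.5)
Your reduction is the right one, and it is the structure behind the cited Proposition~2.23; the paper itself gives no proof, only the citation. It has three parts: an exhaustive check at $n=4$ (by the contrapositive of Lemma~2.24 this already covers $n\le 3$), one vanishing coefficient at $n=5$, and Lemma~2.24 for all larger $n$.

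The gap is that the proposal stops exactly where the content begins. You neither perform the $n=4$ check nor exhibit an $n=5$ witness, so the ``only if'' direction still rests on the result you set out to re-derive.

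Your search heuristic also points the wrong way. Write $v=\sigma\delta_n$ and $w=\pi\delta_n$. Then $\operatorname{sgn}(\sigma)\operatorname{sgn}(\pi)=(-1)^{n-c}$, where $c$ is the number of cycles (loops and double edges included) of the $2$-regular multigraph on $\{0,\dots,n-1\}$ with one edge $\{v_i,w_i\}$ for each coordinate $i$. This sign is unchanged by swapping $v$ and $w$, and by permuting the coordinates of both $v$ and $w$ by any $\tau$ with $\alpha_{\tau(i)}=\alpha_i$. So each orbit of decompositions contributes a single sign, and a zero coefficient needs at least two orbits of opposite sign. Repeated parts enlarge these orbits. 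For example, $(6,6,3,3,2)$, $(5,5,5,3,2)$ and $(6,6,4,2,2)$ each have a single orbit, with coefficients $4$, $-12$ and $8$.

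A witness with trivial stabilizer does the job: $\alpha=(7,5,4,3,1)$.
\begin{itemize}
\item Its partial sums $7,12,16,19$ are bounded by $8,14,18,20$, so $\alpha\in 2\mathcal{P}(4,3,2,1,0)$.
\item Since $v_1\in\{3,4\}$ and $v_5\in\{0,1\}$, a four-case enumeration shows there are exactly four decompositions $v+w=\alpha$.
\item The first pair is $v=(4,3,0,2,1)$, $w=(3,2,4,1,0)$ and its swap. Their graph is a $5$-cycle, so each has sign $+1$.
\item The second pair is $v=(4,1,2,3,0)$, $w=(3,4,2,0,1)$ and its swap. Their graph is a loop at $2$ plus a $4$-cycle, so each has sign $-1$.
\end{itemize}
Hence $[x^\alpha]a_{\delta_5}^2=2-2=0$, a genuine cancellation, and Lemma~2.24 then finishes all $n\ge5$.

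The $n=6$ certificate $(7,7,7,3,3,3)$ in the appendix avoids cancellation because it admits no decomposition at all. It only starts at $n=6$, though, so it cannot give the sharp threshold.

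The $n=4$ direction still has to be checked over all $16$ dominated partitions of $12$. The cycle-sign formula makes each of these a short hand count.
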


\section{Main result}
\begin{theorem}[Even powers]\label{thm:even-seed}
For every even integer $k\geq4$, $a_{\delta_k}^k$ is non-SNP.
\end{theorem}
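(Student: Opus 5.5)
The plan is to take $n=s=k=2m$ with $m\ge2$ and use \eqref{eq:even_power_vandermonde_as_dyson}. Up to sign, this writes $a_{\delta_k}^k$ as $e_s(Z)^{m(s-1)}D_{s,m}(Z)$. For an exponent $\alpha=m(s-1)\mathbf 1+\gamma$ with $|\gamma|=0$ it gives
\[
[x^\alpha]\,a_{\delta_k}^k=\pm\,\CT\bigl(D_{s,m}(Z)\,Z^{-\gamma}\bigr).
\]
The theorem then splits into two independent steps. First, a majorization check via \eqref{eqn:majorization} that $\alpha\preceq k(k-1,\dots,0)$. Since $\alpha$ sits near the centre $m(s-1)\mathbf 1$ of the dilated permutahedron, this holds whenever the partial sums of $\gamma^\downarrow$ are small, and should be routine. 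Second, and this is the real content, the vanishing $\CT(D_{s,m}Z^{-\gamma})=0$.

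For the vanishing I would use symmetry. $D_{s,m}$ is symmetric, so $\CT(D_{s,m}Z^{-\gamma})$ depends only on the $\mathcal S_s$-orbit of $\gamma$. Hence for a symmetric $f$ of degree $s$ the quantity $\CT(D_{s,m}f/e_s)$ is a linear combination of orbit coefficients $\CT(D_{s,m}Z^{\mu-\mathbf 1})$, with $\mu$ ranging over partitions of $s$ with $\ell(\mu)\le s$, weighted by $[m_\mu]f$ times orbit sizes. Lemma~\ref{lem:dyson} evaluates every such combination as $\epsilon_{-1/m}(f)$ times a nonzero constant. Taking $f=m_\mu$ therefore gives each orbit coefficient explicitly as a multiple of $\epsilon_{-1/m}(m_\mu)$. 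The witness $\gamma=\mu-\mathbf 1$ is then a partition $\mu\vdash s$ with $\epsilon_{-1/m}(m_\mu)=0$, or, if a single monomial function never vanishes there, with $\epsilon_{-1/m}$ vanishing on the whole orbit contribution.

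The main obstacle is that $\epsilon_c(m_\mu)$ is essentially a falling-factorial polynomial $c(c-1)\cdots(c-\ell(\mu)+1)/\prod_i r_i!$, where $r_i$ are the multiplicities of parts of $\mu$. This is visibly nonzero at $c=-1/m$, so the naive choice fails and all exponents of the form $\mu-\mathbf 1$ seem to have nonzero coefficient. I would then pass to exponents with more negative entries, $\gamma=\mu-\nu$ with $\nu$ beyond $(1^s)$. These coefficients are scalar products $\langle m_\mu,m_\nu\rangle'_{s,1/m}$ in \eqref{eq:jack-scalar}. Expanding $m_\mu$ and $m_\nu$ in the orthogonal basis $P^{(1/m)}_\lambda$ (the same mechanism that proves Lemma~\ref{lem:dyson} with $h=e_s=P_{(1^s)}$), the vanishing reduces to an identity among Jack norms and transition coefficients. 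The most promising choice is $\nu$ close to $(1^s)$, for instance $\nu=(2,1^{s-2})$. The Jack expansion of $m_\nu$ then involves only $P_{(1^s)}$ and $P_{(2,1^{s-2})}$, so the coefficient becomes a two-term expression whose terms are computed through \eqref{eq:jack-specialization}. I would choose $\mu$ depending on $m$ so that these two terms cancel exactly, and expect the existence of such a $\mu$ for every $m\ge2$ (failing for $m=1$, consistent with the quadratic fact) to be the crux. Once $\gamma$ is fixed, the majorization check and the sign bookkeeping finish the proof.
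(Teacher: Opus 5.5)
\textbf{Verdict: there is a genuine gap.} The proposal never produces a witness, and the mechanism it proposes cannot isolate a single coefficient.

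Your first step is right, and so is your diagnosis of why it fails. Since $\epsilon_c(m_\mu)=c(c-1)\cdots(c-\ell(\mu)+1)/\prod_i r_i!$ is nonzero at $c=-1/m$, Lemma~\ref{lem:dyson} applied in all $k$ variables shows that every exponent with $\gamma=\mu-\mathbf 1$ has a nonzero coefficient. The proposed repair does not help.

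For $\nu=(2,1^{s-2},0)$ the polynomial $m_\nu(Z^{-1})$ has $s(s-1)$ monomials, so by symmetry
\[
\langle m_\mu,m_\nu\rangle'_{s,1/m}
=\frac{s(s-1)}{s!}\sum_{\beta}\CT\bigl(D_{s,m}(Z)\,Z^{\beta-\nu}\bigr),
\]
where $\beta$ runs over the distinct rearrangements of $\mu$. The vectors $\beta-\nu$ fall into several different $\mathcal S_s$-orbits. So the scalar product is a weighted sum of genuinely different coefficients, and its vanishing would force none of them to vanish. Pairing against $h$ isolates a single orbit when $h(Z^{-1})$ is a single monomial, i.e.\ $h=e_s^c=P_{(c^s)}$. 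But then you need the coefficient of the rectangular $P_{(c^s)}$ in $f$, which \eqref{eq:jack-specialization} supplies only for $c=1$.

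The two-term computation is also unsupported. The term $\langle m_\mu,P_{(2,1^{s-2})}\rangle'$ needs the coefficient of $P_{(2,1^{s-2})}$ in $m_\mu$, together with its norm. Since $\epsilon_{-1/m}$ annihilates $P_{(2,1^{s-2})}$ (the cell $(1,2)$ gives a zero numerator), the specialization formula cannot compute it. Finally, the existence of a cancelling $\mu$ for every $m\ge2$ is only ``expected''; that is the entire content of the theorem.

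What is missing is the paper's variable-splitting step. Take $N=k=2m$, $s=N-2$, $q=N-1$, and $\alpha=(H,H,L,\dots,L)$ with $H=q^2$ and $L=(m-1)q=m(s-1)+1$.
\begin{itemize}
\item Factor $a_{\delta_N}^N=(x_1-x_2)^NQ(x_1)Q(x_2)\,a_{\delta_s}(Z)^N$ with $Q(u)=\prod_{i=1}^s(u-z_i)^N$, which has degree $Ns=H-1$.
\item Extracting $[x_1^Hx_2^H]$ leaves the symmetric degree-$s$ function $R=-\sum_{r=0}^s(-1)^r\binom N{r+1}E_rE_{s-r}$, where $E_r=e_r[NZ]$.
\item Because $L$ exceeds $m(s-1)$ by exactly one, the coefficient becomes $\pm\CT(D_{s,m}R/e_s)$ in $s$ variables. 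This is exactly the $c=1$ situation the lemma handles.
\item Since $N/m=2$, one gets $\epsilon_{-1/m}(E_r)=(-1)^r(r+1)$, and hence $\epsilon_{-1/m}(R)=-(-1)^sN(N-1)(1-1)^{N-2}=0$.
\end{itemize}

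In your $k$-variable normalization $\alpha=m(k-1)\mathbf 1+\gamma$, this witness has $\gamma=((m-1)q,(m-1)q,-q,\dots,-q)$, with entries as low as $-(k-1)$. Equivalently, it is the coefficient obtained by pairing $m_{(mq,mq)}$ against $e_k^{\,q}=P_{(q^k)}$: a rectangular coefficient with $c=k-1$, far from the $\nu\approx(1^s)$ regime you propose to search. The two-variable extraction is precisely what converts it into a $c=1$ coefficient the lemma can evaluate. Polytope membership is then not a near-centre triviality, but it still follows from the centered partial sums $q(k-r)\le mr(k-r)$.
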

\begin{proof}
We exhibit an integral point in the Newton polytope whose coefficient
vanishes.

\smallskip
\noindent\textbf{Part 1: the exponent.}
Write $k=2m$, where $m\geq2$, and set
    \begin{align*}
        &N=2m,\qquad s=N-2,\qquad q=N-1,\\
        &H=q^2,\qquad L=(m-1)q=m(s-1)+1.
    \end{align*}

    The exponent in question is
    \begin{equation}
        \alpha=(H,H,\underbrace{L,\ldots,L}_{s\text{ times}}).
    \end{equation}

\smallskip
\noindent\textbf{Part 2: membership in the Newton polytope.}
Recall that
\[
    \Newton(a_{\delta_k}^k)=k\mathcal{P}(k-1,\ldots,0).
\]

First observe that
\begin{equation}
    2H+sL
=
2q^2+(2m-2)(m-1)q
=
2m^2q
=
k\binom{k}{2}.
\end{equation}

The mean of the coordinates of both $\alpha$ and
$k\cdot(k-1,k-2,\ldots,0)$ is $mq$.  After subtracting this mean, the
decreasing rearrangement of $\alpha$ is
\begin{equation}
\bar\alpha=
\bigl((m-1)q,(m-1)q,
\underbrace{-q,\ldots,-q}_{N-2\text{ times}}\bigr).
\end{equation}
The majorization inequalities for membership in \(k \mathcal{P}(k - 1, \ldots, 0)\)
become
\begin{equation}
\sum_{i=1}^r {\bar \alpha}_i\leq mr(k-r),
\qquad 1\leq r<k.
\end{equation}
For $r=1$, this is
$(m-1)q\leq mq$.
For $2\leq r<k$, the left-hand side is
\begin{equation}
2(m-1)q-(r-2)q
=
q(k-r)
\leq mr(k-r),
\end{equation}
because $q=k-1=2m-1\leq2m\leq mr$. Hence
$\alpha\in\Newton(a_{\delta_k}^k)$.

\smallskip
\noindent\textbf{Part 3: vanishing coefficient.}
An asterisk on an equation number indicates that a more careful expanded derivation
is provided in Section~\ref{sec:detailed-computations}.
Write the variables as
\[
x_1,x_2,Z,
\qquad
Z=(z_1,\ldots,z_s).
\]
Let
\begin{equation}
Q(u)=\prod_{i=1}^s(u-z_i)^N.
\end{equation}
Then the Vandermonde determinant factors as
\begin{extendednumbering}
\begin{equation}\label{eq:vandermonde_factor}
    a_{\delta_N}(x_1,x_2,Z)^N
=
(x_1-x_2)^NQ(x_1)Q(x_2)a_{\delta_s}(Z)^N.
\end{equation}
\end{extendednumbering}

Define stable symmetric functions $E_r\in\Lambda_{\mathbb{Q}}$ by
\begin{equation}
    \sum_{r\geq0}E_rt^r
    =
    \exp\left(
        N\sum_{j\geq1}\frac{(-1)^{j-1}p_jt^j}{j}
    \right).
    \label{eq:stable-E}
\end{equation}
After evaluation at $Z=(z_1,\ldots,z_s)$, this identity becomes
\begin{equation}
    \sum_{r\geq0}E_r(Z)t^r
    =
    \prod_{i=1}^s(1+tz_i)^N.
    \label{eq:E-evaluation}
\end{equation}
Since $Ns=N(N-2)=(N-1)^2-1=H-1$,
we can write $Q$ in terms of $E_r$ as
\begin{extendednumbering}
\begin{equation}\label{eq:Q_as_E}
    Q(u)=\sum_{r=0}^{Ns}(-1)^rE_r(Z)u^{H-1-r}.
\end{equation}
\end{extendednumbering}

A direct coefficient extraction for $x_1,x_2$ gives
\begin{extendednumbering}
\begin{equation}\label{eq:extraction_x1_x2_coefficient}
    [x_1^Hx_2^H]
\,(x_1-x_2)^NQ(x_1)Q(x_2)
=
R(Z),
\end{equation}
\end{extendednumbering}
where
\begin{extendednumbering}
\begin{equation}
R := -\sum_{r=0}^{s}
(-1)^r\binom{N}{r+1}E_rE_{s-r}\in\Lambda_{\mathbb{Q}}.
\end{equation}
\end{extendednumbering}

Consequently,
\begin{equation}
[x^\alpha]a_{\delta_N}^N
=
[z_1^L\cdots z_s^L]
a_{\delta_s}(Z)^N R(Z).
\end{equation}

By \eqref{eq:even_power_vandermonde_as_dyson},
\begin{equation}
a_{\delta_s}(Z)^N
=
(-1)^{m\binom{s}{2}}
e_s(Z)^{m(s-1)}D_{s,m}(Z).
\end{equation}
Using \(L=m(s-1)+1\), we therefore obtain
\begin{equation}
    [x^\alpha]a_{\delta_N}^N
=
(-1)^{m\binom{s}{2}}
\CT\left(
D_{s,m}(Z)\frac{R(Z)}{e_s(Z)}
\right).
\end{equation}

Applying Lemma~\ref{lem:dyson} to the homogeneous symmetric function
$R$ of degree $s$ gives
\begin{equation}
    [x^\alpha]a_{\delta_N}^N
=
(-1)^{m\binom{s}{2}}
\CT(D_{s,m})
\frac{(-1)^s s!}{(1/m)_s}\,
\epsilon_{-1/m}(R).
\end{equation}

To compute $\epsilon_{-1/m}(R)$, first apply the specialization
coefficientwise to \eqref{eq:stable-E}:
\begin{extendednumbering}
\begin{equation}\label{eq:specialization_of_E_r_1}
\sum_{r\geq 0}\epsilon_{-1/m}(E_r)t^r
=
\exp\left(
\sum_{j\geq 1}
\frac{(-1)^{j-1}N(-1/m)t^j}{j}
\right) =(1+t)^{-2}.
\end{equation}
\end{extendednumbering}

Hence
\begin{equation}\label{eq:specialization_of_E_r_2}
\epsilon_{-1/m}(E_r)=(-1)^r(r+1).
\end{equation}

It follows that
\begin{align}
\epsilon_{-1/m}(R)
&=
-(-1)^s
\sum_{r=0}^s
(-1)^r
\binom{N}{r+1}
(r+1)(s-r+1)\\
&=
-(-1)^sN(N-1)
\sum_{r=0}^{N-2}
(-1)^r\binom{N-2}{r}\\
&=
-(-1)^sN(N-1)
(1- 1)^{N-2}\\
&=0,
\end{align}
where we used $s=N-2$ and the identity
\begin{equation}
    \binom{N}{r+1}(r+1)(N-r-1)
=
N(N-1)\binom{N-2}{r}.
\end{equation}

Since $N-2>0$, the last sum is $(1-1)^{N-2}=0$.  The constant-term
lemma now implies
\[
\CT\left(
D_{s,m}(Z)\frac{R(Z)}{e_s(Z)}
\right)=0.
\]
Therefore
\[
[x^\alpha]a_{\delta_N}^N=0.
\]
Thus $\alpha$ is a lattice point of
$\Newton(a_{\delta_N}^N)$ that is not an exponent vector
of \(a_{\delta_N}^N\). Hence \(a_{\delta_N}^N=a_{\delta_k}^k\) is non-SNP.
\end{proof}

\begin{proof}[Proof of Theorem~\ref{thm:main}]
If $k$ is odd, the three-variable fact above supplies a non-SNP seed and
\cite[Lemma~2.24]{MonicalTokcanYong2019} propagates it to every $n\geq3$.
For $k=2$, Proposition~2.23 of the same reference gives non-SNP for every
$n\geq5$.  If $k\geq4$ is even, Theorem~\ref{thm:even-seed} supplies a
seed at $n=k$, and the same propagation lemma applies.  Thus one may take
\[
    N_k=
    \begin{cases}
        3,&k\text{ odd},\\
        5,&k=2,\\
        k,&k\geq4\text{ even}.
    \end{cases}
\]
\end{proof}

\section{Proof of Lemma~\ref{lem:dyson}}

\begin{proof}
Put $\tau=1/m$, and let $P_\lambda^{(\tau)}$ denote the monic Jack
symmetric function.  For symmetric polynomials $g$ and $h$ in $s$
variables, consider the finite-variable Jack scalar product
\cite[Chapter~VI, \S10, equations~(10.34)--(10.36)]{macdonald1995symmetric}:
\begin{align}
    \langle g,h\rangle'_{s,\tau}
&=
\frac1{s!}
\operatorname{CT}\left(
g(Z)h(Z^{-1})
\prod_{i\neq j}
\left(1-\frac{z_i}{z_j}\right)^{1/\tau}
\right)\\
&=
\frac1{s!}
\operatorname{CT}\left(
g(Z)h(Z^{-1})D_{s,m}(Z)
\right).
\end{align}

The evaluated Jack polynomials $P_\lambda^{(\tau)}(Z)$, for
$\ell(\lambda)\leq s$, are orthogonal for this scalar product.  Expand
the homogeneous symmetric function $f$ in the Jack basis:
\begin{equation}
    f=\sum_{\lambda\vdash s}c_\lambda P_\lambda^{(\tau)}.
    \label{eq:jack-expansion}
\end{equation}
Every partition of $s$ has length at most $s$, so evaluation at $Z$
retains the entire expansion.  Since $P_{(1^s)}^{(\tau)}=e_s$,
orthogonality gives
\begin{equation}
   \langle f,e_s\rangle'_{s,\tau}
=
c_{(1^s)}
\langle e_s,e_s\rangle'_{s,\tau}.
\end{equation}

Since $e_s(Z^{-1})=1/e_s(Z)$,
\begin{equation}
    \langle f,e_s\rangle'_{s,\tau}
=
\frac1{s!}
\operatorname{CT}\left(
D_{s,m}(Z)\frac{f(Z)}{e_s(Z)}
\right).
\end{equation}

Also, $e_s(Z)e_s(Z^{-1})=1$, so
\begin{equation}
    \langle e_s,e_s\rangle'_{s,\tau}
=
\frac1{s!}
\operatorname{CT}\bigl(D_{s,m}(Z)\bigr).
\end{equation}

The denominator is nonzero.  Integration against normalized Haar
measure on the torus extracts the constant term of a Laurent polynomial
\cite[Chapter~VI, \S9, equation~(9.10) and the paragraph following
it]{macdonald1995symmetric}.  Thus, writing
$z_j=e^{\mathrm{i}\theta_j}$, and pairing the
factors indexed by $(i,j)$ and $(j,i)$ gives
\begin{equation}
    \CT(D_{s,m}(Z))
    =
    \frac{1}{(2\pi)^s}
    \int_{[0,2\pi]^s}
    \prod_{1\leq i<j\leq s}
    \left|1-e^{\mathrm{i}(\theta_i-\theta_j)}\right|^{2m}
    \,d\theta_1\cdots d\theta_s
    >0.
\end{equation}
The strict inequality holds because the integrand is nonnegative
everywhere and positive whenever the $e^{\mathrm{i}\theta_j}$ are
pairwise distinct.
Consequently,
\begin{equation}
\frac{
\operatorname{CT}\!\left(
D_{s,m}(Z)\frac{f(Z)}{e_s(Z)}
\right)}
{\operatorname{CT}\bigl(D_{s,m}(Z)\bigr)}
=
c_{(1^s)}.
\label{eq:ratio-is-column-coefficient}
\end{equation}

It remains to compute \(c_{(1^s)}\).  Macdonald's specialization formula
for Jack polynomials is
\cite[Chapter~VI, \S10, equation~(10.20)]{macdonald1995symmetric}:
\begin{equation}
\epsilon_{-\tau}\left(P_\lambda^{(\tau)}\right)
=
\prod_{u\in\lambda}
\frac{-\tau+\tau a'(u)-\ell'(u)}
     {\tau a(u)+\ell(u)+1},
\end{equation}
where \(a(u)\), \(\ell(u)\), \(a'(u)\), and \(\ell'(u)\) are the
arm, leg, coarm, and coleg of the cell \(u\).

If \(\lambda\neq(1^s)\), then \(\lambda\) contains the cell $u=(1,2)$,
for which $a'(u)=1$ and $\ell'(u)=0$.  The corresponding numerator is
$-\tau+\tau a'(u)-\ell'(u)=0$.  It follows that
$\epsilon_{-\tau}\left(P_\lambda^{(\tau)}\right)=0$ for every $\lambda\vdash s$ with $\lambda\neq(1^s)$.

For \(\lambda=(1^s)\), the cells are $u_i=(i,1)$ for $1\leq i\leq s$,
and
\[
a(u_i)=0,
\qquad
\ell(u_i)=s-i,
\qquad
a'(u_i)=0,
\qquad
\ell'(u_i)=i-1.
\]
Therefore,
\begin{align}
\epsilon_{-\tau}\left(P_{(1^s)}^{(\tau)}\right)
&=
\prod_{i=1}^s
\frac{-\tau-(i-1)}{s-i+1}\\
&=
(-1)^s
\frac{\tau(\tau+1)\cdots(\tau+s-1)}
     {s(s-1)\cdots1}\\
&=
(-1)^s\frac{(\tau)_s}{s!}.
\end{align}

Applying \(\epsilon_{-\tau}\) to \eqref{eq:jack-expansion} gives
\begin{align}
\epsilon_{-\tau}(f)
&=
\sum_{\lambda\vdash s}
c_\lambda
\epsilon_{-\tau}\left(P_\lambda^{(\tau)}\right)\\
&=
c_{(1^s)}
\epsilon_{-\tau}\left(P_{(1^s)}^{(\tau)}\right)\\
&=
c_{(1^s)}
(-1)^s\frac{(\tau)_s}{s!}.
\end{align}
Thus
\[
c_{(1^s)}
=
\frac{(-1)^s s!}{(\tau)_s}
\epsilon_{-\tau}(f).
\]
Since \(\tau=1/m\), this becomes
\[
c_{(1^s)}
=
\frac{(-1)^s s!}{(1/m)_s}
\epsilon_{-1/m}(f).
\]

Combining this identity with
\eqref{eq:ratio-is-column-coefficient} proves
\eqref{eq:dyson-lemma}.
\end{proof}

\section{Detailed computation}\label{sec:detailed-computations}
\subsection{Equation~\ref{eq:vandermonde_factor}: Vandermonde factorization}
Directly separating the factors involving $x_1$, $x_2$, and $Z$ gives
\begin{align}
a_{\delta_N}(x_1,x_2,Z)^N
&=
(x_1-x_2)^N
\prod_{i=1}^{s}(x_1-z_i)^N
\prod_{i=1}^{s}(x_2-z_i)^N
\left(\prod_{1\leq i<j\leq s}(z_i-z_j)\right)^N\\
&=
(x_1-x_2)^N
\left(\prod_{i=1}^{s}(x_1-z_i)^N\right)
\left(\prod_{i=1}^{s}(x_2-z_i)^N\right)
a_{\delta_s}(Z)^N\\
&=
(x_1-x_2)^N
Q(x_1)Q(x_2)a_{\delta_s}(Z)^N.
\end{align}

\subsection{Equation~\ref{eq:Q_as_E}: Expansion of Q}
By \eqref{eq:E-evaluation},
\[
\prod_{i=1}^{s}(1+t z_i)^N
=
\sum_{r=0}^{Ns}E_r(Z)t^r.
\]

We compute
\begin{align}
Q(u)
&=\prod_{i=1}^{s}(u-z_i)^N\\
&=\prod_{i=1}^{s}
   \left[u^N\left(1-\frac{z_i}{u}\right)^N\right]\\
&=u^{Ns}\prod_{i=1}^{s}
   \left(1+\left(-u^{-1}\right)z_i\right)^N.
\end{align}
Substituting \(t=-u^{-1}\) into the generating function for
\(E_r(Z)\) gives
\begin{align}
Q(u)
&=u^{Ns}\sum_{r=0}^{Ns}
   E_r(Z)\left(-u^{-1}\right)^r\\
&=u^{Ns}\sum_{r=0}^{Ns}
   (-1)^rE_r(Z)u^{-r}\\
&=\sum_{r=0}^{Ns}
   (-1)^rE_r(Z)u^{Ns-r}.
\end{align}
Finally, since \(Ns=H-1\), we obtain
$
Q(u)
=
\sum_{r=0}^{Ns}
(-1)^rE_r(Z)u^{H-1-r}
$.

\subsection{Equation~\ref{eq:extraction_x1_x2_coefficient}: Coefficient extraction}
Expanding term by term gives
\begin{align}
&
[x_1^H x_2^H]
\left(
\sum_{p=0}^{N}
(-1)^p\binom{N}{p}
x_1^p x_2^{N-p}
\right)
Q(x_1)Q(x_2)\\
&=
\sum_{p=0}^{N}
(-1)^p\binom{N}{p}
[x_1^H x_2^H]\,
x_1^p x_2^{N-p}Q(x_1)Q(x_2)\\
&=
\sum_{p=0}^{N}
(-1)^p\binom{N}{p}
[x_1^{H-p}]Q(x_1)\,
[x_2^{H-(N-p)}]Q(x_2)\\
&=
\sum_{p=1}^{N-1}
(-1)^p\binom{N}{p}
[u^{H-p}]Q(u)\,
[u^{H-N+p}]Q(u),
\end{align}
The terms $p=0$ and $p=N$ vanish because $\deg Q=H-1$.

From \eqref{eq:Q_as_E},
\[
Q(u)
=
\sum_{a=0}^{Ns}
(-1)^aE_a(Z)u^{H-1-a}.
\]
Thus
\begin{equation}
    [u^{H-p}]Q(u)
=
(-1)^{p-1}E_{p-1}(Z),\qquad
    [u^{H-N+p}]Q(u)
=
(-1)^{N-p-1}E_{N-p-1}(Z).
\end{equation}

Substituting these two coefficients gives
\begin{align}
&\sum_{p=1}^{N-1}
(-1)^p\binom{N}{p}
(-1)^{p-1}E_{p-1}(Z)
(-1)^{N-p-1}E_{N-p-1}(Z)\\
&=
\sum_{p=1}^{N-1}
(-1)^p\binom{N}{p}
(-1)^{N-2}
E_{p-1}(Z)E_{N-p-1}(Z)\\
&=
\sum_{p=1}^{N-1}
(-1)^p\binom{N}{p}
E_{p-1}(Z)E_{N-p-1}(Z)\\
&=
-\sum_{r=0}^{s}
(-1)^r
\binom{N}{r+1}
E_r(Z)E_{s-r}(Z) =
R(Z).
\end{align}

\subsection{Equation~\ref{eq:specialization_of_E_r_1}: Specialization of E}
Extend $\epsilon_{-1/m}$ coefficientwise to
$\Lambda_{\mathbb{Q}}[[t]]$.  Applying it to the stable identity
\eqref{eq:stable-E}, and using $N/m=2$, gives
\begin{align}
\sum_{r\geq0}\epsilon_{-1/m}(E_r)t^r
&=
\exp\left(
N\sum_{j\geq1}
\frac{(-1)^{j-1}}{j}\epsilon_{-1/m}(p_j)t^j
\right)\\
&=
\exp\left(
-\frac{N}{m}
\sum_{j\geq1}\frac{(-1)^{j-1}}{j}t^j
\right)\\
&=
\exp\left(-2\log(1+t)\right)
=(1+t)^{-2}.
\end{align}

Finally, the negative-binomial expansion gives
\begin{align}
(1+t)^{-2}
&=
\sum_{r\geq0}\binom{-2}{r}t^r\\
&=
\sum_{r\geq0}(-1)^r(r+1)t^r.
\end{align}
Comparing coefficients of \(t^r\) yields
\eqref{eq:specialization_of_E_r_2}:
\begin{equation}
    \epsilon_{-1/m}(E_r)
=
(-1)^r(r+1).
\end{equation}

\section{Generative AI usage}
OpenAI Codex (GPT Sol 5.6 Extra High) was used as an interactive research and
writing aid in developing this proof.  All prompts and follow-up
instructions to Codex were supplied by the first author.  In the initial
conversation (reproduced below), Codex assisted with the search for a
uniform construction, computational testing of candidate exponent vectors,
and the derivation and verification of the Dyson--Jack constant-term
argument.
The generation process took approximately 5.5--6 hours; the exact duration
is unknown because the session crashed several times.  The conversation
took place on a November 2023 MacBook Pro with an Apple M3 Pro chip and
36 GB of memory.

Subsequently, Codex was used to clarify intermediate identities, expand
omitted coefficient calculations, identify relevant references, and
prepare the argument in \LaTeX.  The authors reviewed the resulting
proof and citations, assembled the paper, and take full responsibility
for the final mathematical claims.

The accompanying Lean formalization is available at\newline
\url{https://github.com/steven-le-thien/vandermonde-snp}.

\section{Acknowledgments}
The first author thanks Alexander Yong for his introduction to this
problem and subsequent discussions.  The project is sponsored by the
Defense Advanced Research Projects Agency under cooperative agreement
HR0011262E027.  The content of the information does not necessarily
reflect the position or the policy of the Government, and no official
endorsement should be inferred.

\bibliographystyle{alpha}
\bibliography{references}
\appendix

\section{Codex conversation and initial prompt}
We provide below the full conversation that generated the proof, followed
by the initial prompt.  Codex generated that prompt after being asked to
follow and mimic the prompt used by OpenAI to resolve the cycle double
cover conjecture.

\subsection{Conversation}
\begin{Verbatim}
# Prove Vandermonde power non-SNP

> User context
>
> Attachments:
> - You are directing a long-horizon mathematical research program whose sole objec…: `~/.codex/attachments/9cdac09c-1075-4e8a-a36a-5b938d707337/pasted-text.txt`

<details><summary>12 previous messages</summary>

> I’ll read the attached research brief first, then carry out the requested work and verify any deliverables it calls for.
>
> I’ve parsed the brief. The odd-power argument is concrete; the hard part is a uniform even-power hole. I’m setting this up as a persistent research goal, beginning with three independent tracks in parallel: exact small-dimension coefficient identities, Dyson/constant-term methods, and computational pattern discovery plus adversarial testing. I’ll also update the literature search independently before trusting any claimed resolution.
>
> <details><summary>Searched the web</summary>
>
> <details><summary>Searched the web</summary>
>
> - Searched the web for `"Conjecture 2.25" "Newton Polytopes in Algebraic Combinatorics" Vandermonde SNP ...`
> - Searched the web for `site:arxiv.org/abs/1201.4572 Ballantine Vandermonde ...`
> - Searched the web for `"a_{delta_n}" "SNP" Vandermonde ...`
>
> </details>
>
> </details>
>
> The first structural audit has already ruled out a tempting shortcut: equal-variable specialization forces many coefficient relations, but its easiest isolated zeros lie below the permutahedron’s lower-face bound. I’m now testing near-face and centered coefficients, where Dyson/Jack identities can produce genuine characteristic-zero cancellation without violating majorization.
>
> <details><summary>Created a file, edited a file, searched the web</summary>
>
> - Wrote `./work/near_vertex.py` (+87 -0)
>
> - Wrote `./work/near_vertex.py` (+34 -0)
>
> <details><summary>Searched the web</summary>
>
> - Searched the web for `Jack polynomial Vandermonde power monomial coefficient zero negative parameter staircase ...`
> - Searched the web for `"Determinantal Expression and Recursion for Jack Polynomials" pdf Lapointe Lascoux Morse`
>
> </details>
>
> </details>
>
> The literature trail points to a sharper route through the Jack-polynomial recurrence: at the Vandermonde parameter \(\alpha=-2/(2m-1)\), certain recurrence factors can vanish exactly. I’m using the PDF inspection workflow now to verify the original determinant/recurrence statements and avoid importing a normalization or specialization outside its hypotheses.
>
> <details><summary>Searched the web</summary>
>
> <details><summary>Searched the web</summary>
>
> - Searched the web for `Knop Sahi combinatorial formula Jack polynomials admissible tableaux coefficient weight formula critical boxes ...`
>
> </details>
>
> </details>
>
> The first round has now produced exact certificates for the published cases and one genuine infinite family of even powers, but it does not cover all even \(k\). I’m deliberately not promoting it to a proof. A second independent round is starting around the strongest new lead: multivariate cut-slack coefficients reduce to Krawtchouk-type finite differences, so the target is an identity whose zero occurs for every even \(k\), with the majorization inequalities built in rather than checked afterward.
>
> <details><summary>Edited a file, searched the web</summary>
>
> - Wrote `./work/near_vertex.py` (+2 -2)
>
> <details><summary>Searched the web</summary>
>
> - Searched the web for `"Conjecture 2.25" Vandermonde Monical Tokcan Yong ...`
> - Searched the web for `inverse Jack Kostka coefficient rectangle monomial Jack polynomial zero ...`
> - Searched the web for `Tewodros Amdeberhan Vandermonde powers Newton polytope Jack 2024 ...`
>
> </details>
>
> </details>
>
> Context compaction
>
> Source: automatic
> Status: completed
>
> The literature audit confirms there is no known proof to import; the even case really is the open core. I’m now pressing on constructions whose dimension can grow with \(k\), since exhaustive work has ruled out every fixed-\(n=5\) affine template.
>
> A useful new angle is emerging from the near-vertex expansion: fixed small cut slacks turn the target coefficient into an explicit integer polynomial in \(k\). I’m testing whether a tunable local “gadget” can give the factor \(k-2m\) for arbitrary \(m\); that would settle the even case cleanly and propagate automatically.
</details>

> my computer got disconnected, continue with the previous prompt

<details><summary>17 previous messages</summary>

> Continuing from the exact point of interruption. The literature audit, odd-power proof, finite certificates, and current even-power constructions are preserved; I’m resuming the orientation/Eulerian-subgraph attack on the universal even case.
>
> The resumed proof search has reached a sharper formulation: a target coefficient is the signed imbalance of Eulerian subdigraphs of a chosen orientation of \(kK_n\). I’m combining that with a two-variable “thin slice” of the Vandermonde, where the residual factor is an explicit Hahn/Jacobi polynomial; the goal is a parity-forced zero at an admissible weight, not merely a numerical cancellation.
>
> <details><summary>Edited a file, searched the web</summary>
>
> - Wrote `./work/near_vertex.py` (+9 -0)
>
> - Wrote `./work/near_vertex.py` (+35 -0)
>
> <details><summary>Searched the web</summary>
>
> - Searched the web for `"powers of the Vandermonde" zero coefficients monomial expansion ...`
>
> </details>
>
> </details>
>
> <details><summary>Searched the web</summary>
>
> <details><summary>Searched the web</summary>
>
> - Searched the web for `Jack characters rectangular Young diagrams explicit formula zeros Stanley character formula ...`
> - Searched the web for `Jack polynomial negative parameter coefficient sign rule squeezing Laughlin coefficients signs ...`
>
> </details>
>
> </details>
>
> Context compaction
>
> Source: automatic
> Status: completed
>
> I’ve resumed from the exact search state, including the verified odd-power proof, propagation lemma, finite even-power certificates, and the outstanding uniform even-power case. I’m checking the surviving parallel searches now, then I’ll push the two most promising structural routes: diagonal-fiber cancellation and rectangular Jack/Dyson coefficients.
>
> A genuinely uniform candidate has just emerged from the Dyson/Schur route: for \(k=2m\), it proposes \(n=4m\) and a two-level exponent vector, reducing the missing coefficient to one specialized generalized Dyson constant term. I’m treating this as a theorem-strength lead, not a result yet—the decisive task is to reconstruct the needed vanishing lemma self-containedly and audit the indexing, specialization, and polytope membership independently.
>
> <details><summary>Searched the web</summary>
>
> <details><summary>Searched the web</summary>
>
> - Searched the web for `Huang Jiang Zhou generalized q-Dyson Schur vanishing theorem 2026 Kadell orthogonality ...`
>
> </details>
>
> </details>
>
> The first uniform specialization failed an adversarial check at \(m=1\): the cited two-part product was not the ordinary Dyson product. That route is rejected. There is, however, a potentially reparable specialization—taking the distinguished block to have size \(n-1\) makes every pair receive the same decrement and recovers the ordinary Dyson product exactly—so I’m now deriving the theorem’s actual vanishing inequalities before deciding whether it yields an interior coefficient.
>
> I’m shifting to a more elementary invariant that could eliminate the constant-term machinery entirely. Translation invariance gives
> \[
> \sum_i(\gamma_i+1)[x^{\gamma+e_i}]\Delta_n^k=0.
> \]
> The key is to search for a degree-\(D-1\) partition \(\gamma\) having exactly one increment-orbit inside \(kP_{\delta_n}\); then symmetry forces that orbit’s coefficient to vanish. I’m testing this criterion symbolically across \(k,n\) to see whether it has a uniform facet pattern.
>
> <details><summary>Created a file</summary>
>
> - Wrote `./work/search_translation_cert.py` (+79 -0)
>
> </details>
>
> The diagonal-fiber method has now been ruled out in its naive form: every two-coordinate section of the permutahedron contains at least \(k+1\) lattice points, so a coalescence identity cannot isolate one orbit when \(k\ge2\). I’m retaining confluent identities but, in parallel, examining a network-flow gadget where the number of intermediate vertices depends on \(k\); two-unit flows are the first place where direct and routed contributions can acquire a factor of the form \(k-L\).
>
> <details><summary>Created a file, searched the web</summary>
>
> <details><summary>Searched the web</summary>
>
> - Searched the web for `"Conjecture 2.25" "Vandermonde" SNP ...`
> - Searched the web for `integer point permutahedron sum of permutations vertex semigroup normality permutohedron ...`
>
> </details>
>
> - Wrote `./work/group_sum_decompose.py` (+61 -0)
>
> </details>
>
> <details><summary>Searched the web</summary>
>
> <details><summary>Searched the web</summary>
>
> - Searched the web for `OEIS number of vectorial sums of three permutations ...`
>
> </details>
>
> </details>
>
> A new differential route looks substantially stronger. The reflection-group identity
> \[
> \Delta(\partial)\Delta^{k+1}=C_{n,k}\Delta^k\qquad(C_{n,k}\ne0)
> \]
> reduces a desired even coefficient to coefficients of the alternating odd power. Thus it is enough to find \(\alpha\in kP_\delta\) such that, for every permutation \(p\) of \(0,\ldots,n-1\), the vector \(\alpha+p\) has a repeated coordinate. Every contributing odd-power coefficient then vanishes individually. I’m now searching this finite “no distinct shifted transversal” condition; if it admits a uniform construction, it gives a short self-contained proof.
>
> The shifted-transversal variant collapses for a simple reason: sorting \(\alpha\) and assigning \(0,1,\ldots,n-1\) in the same order always makes the shifted entries distinct. I’ve rejected it.
>
> A stronger use of the same differential invariance survives: every degree-\(D-1\) partition with exactly two admissible unit increments gives a positive two-term relation between two coefficients, forcing opposite signs if both are nonzero. An odd cycle of such relations forces at least one coefficient to vanish. This only needs an existential hole and may admit a uniform boundary-cycle construction, so I’m building the exact relation graph now.
>
> <details><summary>Created a file, edited a file</summary>
>
> - Wrote `./work/translation_graph.py` (+83 -0)
> - Wrote `./work/translation_graph.py` (+37 -0)
>
> </details>
</details>

> codex crashed, resume previous prompt

<details><summary>27 previous messages</summary>

> Resumed again from the saved research state. The odd case and propagation remain closed; the uniform even case is still the only theorem-strength gap. Immediately before the crash I was testing whether translation-invariance relations form a weighted cycle whose coefficient ratios are inconsistent—such a cycle would force a hole without needing an explicit coefficient formula. I’m restoring that computation and checking the parallel searches now.
>
> The two-term relation graphs are acyclic in every tested case, so the weighted-cycle certificate does not materialize. I’m testing a sharper symmetry variant instead: if the two endpoints of one translation relation are complementary under
> \[
> c_\alpha=c_{\,k(n-1)\mathbf1-\alpha},
> \]
> then the positive two-term relation immediately forces that coefficient to be zero. This converts the search from a global cycle to a single exact boundary configuration.
>
> I’m revisiting Kadell orthogonality in a form that avoids the failed two-part specialization. The standard identity says
> \[
> \operatorname{CT}\bigl(x^{-v}h_r[mX]D_{n,m}\bigr)=0
> \quad\text{when }|v|=r,\ \max v_i<r.
> \]
> If the intersection of \(v-\operatorname{supp}h_r[mX]\) with the Dyson polytope consists of a single permutation orbit, every term is a positive multiple of one Dyson coefficient, forcing it to vanish. This is now a finite convex-intersection search for a pattern uniform in \(m\).
>
> <details><summary>Created a file, edited a file</summary>
>
> - Wrote `./work/search_kadell_orbit.py` (+58 -0)
> - Wrote `./work/search_kadell_orbit.py` (+31 -1)
>
> </details>
>
> The translation system itself is now fully understood: symmetric translation invariance plus the pair facets uniquely characterize \(\Delta_n^k\), but that uniqueness does not yet produce a uniform zero.
>
> I’m testing a projection that may: under \(x_i=t^{i-1}\),
> \[
> \Delta_n(t)^{k}=t^{C}\prod_{d=1}^{n-1}(1-t^d)^{k(n-d)}.
> \]
> If a zero univariate coefficient occurs at a weight level whose lattice points form a single permutation orbit, symmetry forces the underlying multivariate coefficient to be zero. Both the projected coefficient and the orbit fiber can be computed exactly.
>
> <details><summary>Created a file, searched the web</summary>
>
> - Wrote `./work/principal_orbit_search.py` (+54 -0)
>
> <details><summary>Searched the web</summary>
>
> - Searched the web for `rectangular Jack character cycle type equal parts formula rectangle Stanley Lassalle ...`
> - Searched the web for `Dyson constant term coefficient (a repeated r, -a repeated r) formula ...`
>
> </details>
>
> </details>
>
> Context compaction
>
> Source: automatic
> Status: completed
>
> I’m resuming from the exact research state, including the active proof audits. The latest projection and local-involution routes are now closed; I’m testing the strongest remaining structural lead—a balanced two-block family that already vanishes in the first nontrivial cases—while also checking whether Morris/Aomoto constant-term vanishing can isolate the needed single coefficient.
>
> <details><summary>Searched the web</summary>
>
> <details><summary>Searched the web</summary>
>
> - Searched the web for `"A Unified Elementary Approach" Dyson Morris Aomoto constant term ...`
>
> </details>
>
> </details>
>
> The first balanced guess has been decisively falsified at \(m=3\) by two independent exact computations. A stronger family has emerged: for \(k=2m\), take \(n=4m+2\) and the centered exponent \((2m^{\,2m+1},-2m^{\,2m+1})\); it vanishes for \(m=1,2\). I’m now reducing this family to a rectangular Jack coefficient and separately testing whether its zero is combinatorial (no permutation decomposition) or genuinely cancellation-based.
>
> <details><summary>Searched the web</summary>
>
> <details><summary>Searched the web</summary>
>
> - Searched the web for `rectangular Jack polynomial character Stanley formula exact rectangle ...`
>
> </details>
>
> </details>
>
> <details><summary>Searched the web</summary>
>
> <details><summary>Searched the web</summary>
>
> - Searched the web for `rectangular Jack polynomial hyperdeterminant alpha integer formula ...`
> - Searched the web for `Jack polynomial specialization at roots of unity rectangular partition ...`
>
> </details>
>
> </details>
>
> A uniform even-power certificate has just crystallized. The rectangular Jack evaluation admits a second, “dual” hyperdeterminant formula whose entries are elementary symmetric functions of the root alphabet. At \(n=4m+2\), every potentially nonzero tensor entry would force a negative row imbalance, while the total imbalance of permutations is zero—so every hyperdeterminant summand vanishes. I’m now checking the parameter normalization and index ranges line by line and running adversarial small-case tests before treating this as a proof.
>
> The apparent uniform construction failed a source-level audit: the two-part product’s reverse cross exponent depends on the first index, so the midpoint specialization is missing one cross factor and is not the Dyson product. I’ve rejected that proof. The same theorem has only one genuinely nontrivial uniform specialization (\(n_0=n-1\)); I’m now testing whether its Schur relations can be combined or triangularly inverted to isolate one Dyson coefficient.
>
> <details><summary>Created a file</summary>
>
> - Wrote `./work/search_hjz_uniform.py` (+117 -0)
>
> </details>
>
> <details><summary>Searched the web</summary>
>
> <details><summary>Searched the web</summary>
>
> - Searched the web for `multi-component q-Dyson constant term Jiang Wang Xin Zhou two part uniform exponents ...`
>
> </details>
>
> </details>
>
> Context compaction
>
> Source: automatic
> Status: completed
>
> I’ve restored the proof state and am continuing from the last viable frontier: generalized \(q\)-Dyson vanishing relations, exact Schur-relation computations, and the remaining even-exponent case.
>
> A new structural reduction is emerging: the even case is equivalent to finding a zero inverse Jack transition coefficient for a rectangular partition. I’m checking whether the known rectangular Jack-character formulas supply a factor that vanishes for every integral parameter \(m\), while the other agents extract a direct certificate from the newer \(q\)-Dyson relations.
>
> <details><summary>Searched the web</summary>
>
> <details><summary>Searched the web</summary>
>
> - Searched the web for `site:arxiv.org rectangular Jack polynomial monomial coefficient inverse Kostka zero ...`
> - Searched the web for `"Conjecture 2.25" "Newton Polytopes in Algebraic Combinatorics" ...`
>
> </details>
>
> </details>
>
> <details><summary>Searched the web</summary>
>
> <details><summary>Searched the web</summary>
>
> - Searched the web for `Dyson product zero coefficients root lattice exact zeros ...`
>
> </details>
>
> </details>
>
> <details><summary>Created a file, searched the web</summary>
>
> - Wrote `./work/target_slack_dp.cpp` (+54 -0)
>
> <details><summary>Searched the web</summary>
>
> - Searched the web for `Laughlin wavefunction monomial coefficients vanish missing squeezed partitions ...`
> - Searched the web for `Laughlin state "zero coefficients" squeezed partitions ...`
> - Searched the web for `AFLT q-Dyson constant term identity Macdonald P lambda P mu ...`
>
> </details>
>
> </details>
>
> Context compaction
>
> Source: automatic
> Status: completed
>
> I’ve recovered the active research state and the interrupted computations. I’m now checking the pending symbolic family and the two remaining structural searches; the earlier AFLT shortcut has been definitively ruled out because its specialization parameters are positive, not negative.
>
> <details><summary>Searched the web</summary>
>
> <details><summary>Searched the web</summary>
>
> - Searched the web for `site:arxiv.org rectangular Jack character Stanley formula rectangle p q permutations formula ...`
>
> </details>
>
> </details>
>
> The collision and isolated-fiber route is now closed: total unimodularity shows every identified-variable fiber contains several distinct coefficient orbits, and the full derivative system only restates Vandermonde divisibility. I’m shifting the main search to exact signed-flow identities and to a stronger “vertex-sum obstruction,” where vanishing would follow before any cancellation is needed.
>
> <details><summary>Created a file, edited a file</summary>
>
> - Wrote `./work/search_n4_small_ends.py` (+44 -0)
> - Wrote `./work/search_n4_small_ends.py` (+4 -0)
>
> </details>
</details>

<details><summary>6 previous messages</summary>

> <details><summary>Searched the web</summary>
>
> - Searched the web for `site:arxiv.org inverse Kostka Jack coefficient rectangular partition monomial rectangle ...`
>
> </details>
>
> A useful reduction has emerged, though it is not yet the proof: root-of-unity clustering turns \(\Delta_{mn}^2\) into a monomial times \(\Delta_n^{2m}\). This recasts the even case as a two-permutation block-sum problem. The naive scaled \(k=2\) hole survives for \(k=4\) but fails at \(k=6\), so I’m testing sentinel versions that preserve the two-permutation obstruction without relying on scaling.
>
> <details><summary>Searched the web</summary>
>
> <details><summary>Searched the web</summary>
>
> - Searched the web for `Borda score vectors sum of k permutations characterization majorization non realizable ...`
>
> </details>
>
> </details>
>
> try not to use more than 100GB of application memory at a time
>
> Understood. I’ll keep the searches memory-bounded and well below 100 GB—favoring symbolic recurrences, sparse state spaces, and streamed checks over large full-support expansions.
>
> <details><summary>Created a file, edited a file, searched the web</summary>
>
> <details><summary>Searched the web</summary>
>
> - Searched the web for `Jack polynomial rectangular specialization roots of unity e_L coefficient hyperdeterminant ...`
>
> </details>
>
> - Wrote `./work/search_twoblock_targets.py` (+52 -0)
>
> - Wrote `./work/search_twoblock_targets.py` (+1 -1)
>
> <details><summary>Searched the web</summary>
>
> - Searched the web for `rectangular Jack character two cycles explicit formula ...`
>
> </details>
>
> </details>
</details>

<details><summary>10 previous messages</summary>

> I’ve found a concrete uniform even-power candidate, verified exactly modulo two large primes for \(m=2,3,4\): with \(n=2m\), take
> \[
> \alpha=\bigl((2m-1)^2,(2m-1)^2,((m-1)(2m-1))^{\,2m-2}\bigr).
> \]
> The remaining task is now sharply localized: prove one rectangular Jack-character coefficient at cycle type \((m(2m-1),m(2m-1))\) vanishes identically. I’m reconstructing that identity from the original recurrence and will then audit the Dyson/monomial conversion.
>
> try not to use more than 80GB of application memory at a time
>
> Understood—I’ll stay below 80 GB. The current proof work is symbolic and sparse; I’ll avoid any full-support expansion whose state estimate approaches that limit.
>
> Context compaction
>
> Source: automatic
> Status: completed
>
> I’m resuming the proof search from the recovered state and will keep application memory below 80 GB. The active bottleneck is now very narrow: proving the rectangular Jack-character vanishing that converts the verified even-exponent construction into a uniform proof.
>
> The even case has now yielded a uniform characteristic-zero cancellation, not just computational evidence. After extracting the two exceptional variables, the target coefficient becomes a Dyson constant term of a degree-\(s\) symmetric polynomial; a Jack-duality lemma reduces that constant term to the specialization \(p_j\mapsto-1/m\), where the remaining sum collapses to \(2m(2m-1)(1-1)^{2m-2}=0\). I’m checking every normalization, sign, and majorization inequality before assembling the final proof.
>
> <details><summary>Created a file</summary>
>
> - Wrote `./work/final_proof_draft.md` (+280 -0)
>
> </details>
>
> <details><summary>Edited a file</summary>
>
> - Wrote `./work/final_proof_draft.md` (+31 -7)
> - Wrote `./work/final_proof_draft.md` (+12 -5)
>
> </details>
>
> The integrated proof has passed two independent line-by-line audits, including exact checks at the smallest boundary cases \(k=1,2,4\). I’ve also hardened the only presentation-sensitive point—the finite-variable Jack specialization—by making the stable lift, parameter convention, duality normalization, and positivity of the Dyson denominator explicit. One final adversarial review is still checking for any hidden theorem-strength gap.
>
> <details><summary>Edited a file</summary>
>
> - Wrote `./work/final_proof_draft.md` (+9 -3)
>
> </details>
</details>

## Proof

Let
\[
V_n(x)=a_{\delta_n}(x)=\prod_{1\le i<j\le n}(x_i-x_j),
\qquad
\delta_n=(n-1,n-2,\ldots,0).
\]

We construct, for every \(k\ge1\), an integral exponent vector in
\(\operatorname{Newt}(V_n^k)\) whose coefficient is zero.

The alternant formula
\[
V_n=\det(x_i^{\,n-j})_{i,j=1}^n
\]
shows that its exponent vectors are precisely the permutations of \(\delta_n\). Hence
\[
\operatorname{Newt}(V_n)=P_{\delta_n}.
\]
Newton polytopes add under multiplication: for every linear functional, the corresponding initial form of a product is the nonzero product of the initial forms. Therefore
\[
\operatorname{Newt}(V_n^k)=kP_{\delta_n}. \tag{1}
\]

We use the majorization description
\[
a\in kP_{\delta_n}
\iff
\sum_{i=1}^n a_i=\frac{kn(n-1)}2
\quad\text{and}\quad
\sum_{i=1}^r a_i^\downarrow
\le k\left(rn-\frac{r(r+1)}2\right)
\tag{2}
\]
for \(1\le r<n\).

### A Dyson constant-term lemma

For \(s,m\ge1\), put
\[
D_{s,m}(z)=\prod_{i\ne j}(1-z_i/z_j)^m,
\qquad e_s=z_1\cdots z_s,
\]
and let \(\operatorname{CT}\) denote constant term. For a scalar \(c\), let
\(\epsilon_c\) be the specialization of symmetric functions determined by
\[
\epsilon_c(p_j)=c\qquad(j\ge1).
\]
A symmetric polynomial of degree at most \(s\) in \(s\) variables has a unique stable lift in the same degree, so this specialization is unambiguous below.

**Lemma.** If \(f\) is symmetric and homogeneous of degree \(s\), then
\[
\frac{\operatorname{CT}\!\left(D_{s,m}f/e_s\right)}
     {\operatorname{CT}(D_{s,m})}
=
\frac{(-1)^s s!}{(1/m)_s}\,\epsilon_{-1/m}(f), \tag{3}
\]
where \((a)_s=a(a+1)\cdots(a+s-1)\).

**Proof.**
Set \(\alpha=1/m\). Let \(P_\lambda^{(\alpha)}\) be the monic Jack polynomials and \(Q_\lambda^{(\alpha)}\) their duals for
\[
\langle p_\lambda,p_\mu\rangle_\alpha
=\delta_{\lambda\mu}z_\lambda\alpha^{\ell(\lambda)}.
\]
The finite-torus Jack scalar product is
\[
\langle f,g\rangle'_{s,\alpha}
=
\frac1{s!}\operatorname{CT}\left(
f(z)g(z^{-1})
\prod_{i\ne j}(1-z_i/z_j)^{1/\alpha}
\right). \tag{4}
\]
The Jack polynomials in at most \(s\) variables are orthogonal for (4). Algebraically, this follows by applying
\(\operatorname{CT}(z_i\partial_{z_i}h)=0\) to the commuting Jack operators: these operators are self-adjoint for (4), and their monic triangular joint eigenvectors are the \(P_\lambda^{(\alpha)}\).

Because
\[
P_{(1^s)}^{(\alpha)}=e_s,
\]
write
\[
f=\sum_{\lambda\vdash s}a_\lambda P_\lambda^{(\alpha)}.
\]
Orthogonality and \(e_s(z)e_s(z^{-1})=1\) imply
\[
\frac{\operatorname{CT}(D_{s,m}f/e_s)}
     {\operatorname{CT}(D_{s,m})}
=a_{(1^s)}. \tag{5}
\]

Define the Jack involution by
\[
\omega_\alpha(p_j)=(-1)^{j-1}\alpha p_j.
\]
Jack duality gives
\[
\omega_\alpha P_\lambda^{(\alpha)}
=Q_{\lambda'}^{(1/\alpha)}. \tag{6}
\]
This follows because \(\omega_\alpha\) interchanges the power-sum scalar products with parameters \(\alpha\) and \(1/\alpha\), conjugates dominance triangularity, and preserves the required dual normalization.

Apply \(\omega_\alpha\) to the Jack expansion of \(f\), then specialize to the one-letter alphabet \(1\). Every \(Q_{\lambda'}(1)\) vanishes unless \(\lambda'=(s)\), equivalently \(\lambda=(1^s)\). Moreover, the one-row part of the Jack reproducing kernel gives
\[
\sum_{r\ge0}Q_{(r)}^{(\beta)}t^r
=
\exp\left(\frac1\beta
\sum_{j\ge1}\frac{p_jt^j}{j}\right).
\]
Thus
\[
Q_{(s)}^{(1/\alpha)}(1)
=[t^s](1-t)^{-\alpha}
=\frac{(\alpha)_s}{s!}. \tag{7}
\]

For every power-sum monomial \(p_\mu\) of degree \(s\),
\[
\epsilon_1(\omega_\alpha p_\mu)
=(-1)^{s-\ell(\mu)}\alpha^{\ell(\mu)}
=(-1)^s\epsilon_{-\alpha}(p_\mu).
\]
Consequently,
\[
(-1)^s\epsilon_{-\alpha}(f)
=a_{(1^s)}\frac{(\alpha)_s}{s!}.
\]
Together with (5), this proves (3).

Finally, its denominator is nonzero: on \(z_i=e^{i\theta_i}\),
\[
D_{s,m}(z)=\prod_{i<j}|1-e^{i(\theta_i-\theta_j)}|^{2m},
\]
so
\[
\operatorname{CT}(D_{s,m})
=
\frac1{(2\pi)^s}
\int_{[0,2\pi]^s}
\prod_{i<j}|1-e^{i(\theta_i-\theta_j)}|^{2m}\,d\theta>0.
\]
\(\square\)

### Odd \(k\)

Suppose \(k\) is odd and take \(n=3\) and
\[
a_k=\left(1,\frac{3k-1}{2},\frac{3k-1}{2}\right).
\]
It is integral and has total sum \(3k\). In decreasing order,
\[
\frac{3k-1}{2}\le2k,
\qquad
3k-1\le3k,
\]
so (2) gives \(a_k\in kP_{\delta_3}\).

Since \(k\) is odd, \(V_3^k\) is alternating. Interchanging the last two variables fixes \(x^{a_k}\) but negates \(V_3^k\). Hence
\[
[x^{a_k}]V_3^k=0. \tag{8}
\]

### The case \(k=2\)

Take \(n=6\) and
\[
a_2=(7,7,7,3,3,3).
\]
Its decreasing partial sums are
\[
7,\ 14,\ 21,\ 24,\ 27,
\]
while those of \(2\delta_6=(10,8,6,4,2,0)\) are
\[
10,\ 18,\ 24,\ 28,\ 30.
\]
The totals are both \(30\), so \(a_2\in2P_{\delta_6}\).

Each monomial of \(V_6\) has an exponent vector which is a permutation of
\((0,1,2,3,4,5)\). If \(x^{a_2}\) occurred in \(V_6^2\), there would be two such permutations whose coordinatewise sum was \(a_2\).

At each coordinate having sum \(3\), the two entries must form either
\(\{0,3\}\) or \(\{1,2\}\). Coordinates having sum \(7\) contain neither \(0\) nor \(1\). Across the two permutations there are two occurrences of \(0\); hence exactly two of the three low coordinates must have type \(\{0,3\}\). Only one low coordinate then has type \(\{1,2\}\), yielding only one occurrence of \(1\), although the two permutations contain two occurrences of \(1\). This is impossible. Thus
\[
[x^{a_2}]V_6^2=0. \tag{9}
\]

### Even \(k\ge4\)

Write \(k=2m\), where \(m\ge2\), and set
\[
N=2m,\qquad s=N-2,\qquad q=N-1,
\]
\[
H=q^2,\qquad L=(m-1)q=m(s-1)+1. \tag{10}
\]
In \(N\) variables define
\[
a_{2m}=(H,H,\underbrace{L,\ldots,L}_{s\text{ times}}). \tag{11}
\]

Its total sum is
\[
2q^2+(2m-2)(m-1)q
=2m^2q
=\frac{(2m)N(N-1)}2.
\]
Subtract the common mean \(mq\) from every coordinate. The decreasing vector becomes
\[
b=((m-1)q,(m-1)q,\underbrace{-q,\ldots,-q}_{N-2}). \tag{12}
\]
After the same centering, the majorization inequalities (2) become
\[
\sum_{i=1}^r b_i\le mr(N-r). \tag{13}
\]
For \(r=1\),
\[
(m-1)q\le mq.
\]
For \(2\le r<N\),
\[
\sum_{i=1}^r b_i
=2(m-1)q-(r-2)q
=q(2m-r)
\le mr(2m-r),
\]
because \(q=2m-1\le mr\). Hence
\[
a_{2m}\in2mP_{\delta_N}. \tag{14}
\]

We now prove its coefficient is zero. Write the variables as
\[
x_1,x_2,Z,\qquad Z=(z_1,\ldots,z_s),
\]
and define
\[
E_r=e_r[NZ],
\qquad
\sum_{r\ge0}E_rt^r=\prod_{i=1}^s(1+t z_i)^N. \tag{15}
\]
Thus \(E_r\) is the elementary symmetric polynomial in the multiset containing \(N\) copies of every \(z_i\).

Put
\[
Q(u)=\prod_{i=1}^s(u-z_i)^N.
\]
Since
\[
Ns=N(N-2)=(N-1)^2-1=H-1,
\]
we have
\[
Q(u)=\sum_{r=0}^{Ns}(-1)^rE_r\,u^{Ns-r}. \tag{16}
\]
Also,
\[
V_N^N
=
V_s(Z)^N(x_1-x_2)^NQ(x_1)Q(x_2). \tag{17}
\]

Expand
\[
(x_1-x_2)^N
=
\sum_{j=0}^N(-1)^j\binom Njx_1^{N-j}x_2^j.
\]
The endpoint terms cannot contribute to \(x_1^Hx_2^H\), because \(\deg Q=H-1\). For \(1\le j\le N-1\), the required indices in the two copies of \(Q\) are
\[
N-j-1,\qquad j-1.
\]
Their signs from (16) multiply to
\[
(-1)^{N-2}=1.
\]
Therefore
\[
[x_1^Hx_2^H](x_1-x_2)^NQ(x_1)Q(x_2)=R(Z), \tag{18}
\]
where
\[
R(Z)
=
-\sum_{r=0}^s(-1)^r\binom N{r+1}E_rE_{s-r}. \tag{19}
\]
The polynomial \(R\) is symmetric and homogeneous of degree \(s\). By (17),
\[
[x^{a_{2m}}]V_N^N
=
[z_1^L\cdots z_s^L]\,V_s(Z)^N R(Z). \tag{20}
\]

For every \(i<j\),
\[
(1-z_i/z_j)^m(1-z_j/z_i)^m
=
(-1)^m\frac{(z_i-z_j)^{2m}}{(z_i z_j)^m}.
\]
Consequently,
\[
V_s(Z)^{2m}
=
(-1)^{m\binom s2}
e_s^{\,m(s-1)}D_{s,m}(Z). \tag{21}
\]
Since \(N=2m\) and \(L=m(s-1)+1\), equations (20)–(21) give
\[
[x^{a_{2m}}]V_N^N
=
(-1)^{m\binom s2}
\operatorname{CT}\left(D_{s,m}(Z)\frac{R(Z)}{e_s(Z)}\right). \tag{22}
\]

It remains to apply the lemma. From (15),
\[
\sum_{r\ge0}\epsilon_{-1/m}(E_r)t^r
=
\exp\left(
\sum_{j\ge1}
\frac{(-1)^{j-1}N(-1/m)t^j}{j}
\right).
\]
Because \(N=2m\), this equals
\[
\exp\left(-2\sum_{j\ge1}\frac{(-1)^{j-1}t^j}{j}\right)
=(1+t)^{-2}.
\]
Hence
\[
\epsilon_{-1/m}(E_r)=(-1)^r(r+1). \tag{23}
\]

Using (19), (23), and \(s=N-2\), we obtain
\[
\begin{aligned}
\epsilon_{-1/m}(R)
&=-(-1)^s\sum_{r=0}^s(-1)^r
  \binom N{r+1}(r+1)(s-r+1)\\
&=-(-1)^sN(N-1)
  \sum_{r=0}^{N-2}(-1)^r\binom{N-2}{r}\\
&=-(-1)^sN(N-1)(1-1)^{N-2}\\
&=0. \tag{24}
\end{aligned}
\]
Here we used
\[
\binom N{r+1}(r+1)(N-r-1)
=N(N-1)\binom{N-2}{r}.
\]
The constant-term lemma and (22) therefore give
\[
[x^{a_{2m}}]V_N^{2m}=0 \qquad(m\ge2). \tag{25}
\]

Thus a missing lattice point exists for every \(k\), with
\[
n(k)=
\begin{cases}
3,&k\ \text{odd},\\
6,&k=2,\\
k,&k\ge4\ \text{even}.
\end{cases} \tag{26}
\]

### Propagation

Suppose \(a\in kP_{\delta_n}\) and \([x^a]V_n^k=0\). Since
\[
V_{n+1}^k
=
V_n^k\prod_{i=1}^n(x_i-x_{n+1})^k,
\]
the vector
\[
\widetilde a=(a_1,\ldots,a_n,kn)
\]
lies in \(\operatorname{Newt}(V_{n+1}^k)\): in the Minkowski sum, add the exponent \((0,\ldots,0,kn)\) obtained by choosing \((-x_{n+1})^k\) from all \(n\) new factors.

Moreover, to obtain \(x_{n+1}\)-degree \(kn\), every new factor must contribute its unique term of \(x_{n+1}\)-degree \(k\). Therefore
\[
[x^{\widetilde a}]V_{n+1}^k
=
(-1)^{kn}[x^a]V_n^k
=0. \tag{27}
\]
Iteration proves that \(V_n^k\) is not SNP for every \(n\ge n(k)\). Hence Conjecture \(2.25\) holds.

> the codex process crashed and was restarted a couple of times so the stated final runtime of finding this proof is incorrect, estimate how much time codex took since the first prompt

<details><summary>1 previous message</summary>

> I’ll reconstruct this from the earliest surviving attachment/work-file timestamps and the goal metadata, then distinguish wall-clock elapsed time from active Codex compute time.
</details>

The proof took about **12 hours 41 minutes wall-clock** from the first prompt to the final answer:

- First prompt: July 22, 2026, 6:11:18 PM EDT
- Final proof delivered: July 23, 2026, 6:52:03 AM EDT

The recorded 2h20m only covered a surviving process segment. Excluding clearly identifiable crash/disconnection gaps, Codex appears to have been actively running for approximately **5½–6 hours**, with several subagents working concurrently.
\end{Verbatim}

\subsection{Initial prompt}
\begin{Verbatim}
You are directing a long-horizon mathematical research program whose sole objective is to prove Conjecture 2.25 of Monical–Tokcan–Yong, “Newton Polytopes in Algebraic Combinatorics,” arXiv:1703.02583.

NON-NEGOTIABLE OPERATING RULES

1. Assume that a complete affirmative proof exists.
2. Use as many as 64 agents dynamically.
3. Begin with independent, genuinely different approaches.
4. Keep a registry of approach families and their exact gaps.
5. Prevent early convergence on an attractive idea.
6. Require concrete lemmas, constructions, equations, or counterexamples—not progress reports.
7. Mark theorem-strength gaps as blocked.
8. Assign independent adversarial agents to attack every proposed proof.
9. Check known failure modes and circular reductions explicitly.
10. Continue launching new rounds after failures.
11. Spend at least eight hours before considering giving up.
12. Return only a complete proof that survives audit.

THE PROBLEM

For n ≥ 1, define

    δ_n = (n−1,n−2,...,1,0)

and the Vandermonde polynomial

    a_{δ_n}(x_1,...,x_n)
      = ∏_{1≤i<j≤n}(x_i−x_j).

A polynomial f has a saturated Newton polytope, abbreviated SNP, when

    supp(f) = Newton(f) ∩ Z^n.

Prove:

    For every positive integer k, there exists N_k such that
    a_{δ_n}^k is not SNP for every n ≥ N_k.

A complete proof need not determine the smallest possible N_k. It is sufficient to construct, for every k, some n(k) and some integer vector α(k) such that

    α(k) ∈ Newton(a_{δ_{n(k)}}^k)

but

    [x^{α(k)}] a_{δ_{n(k)}}^k = 0.

Then rigorously invoke the propagation lemma from the source paper:

    If a_{δ_n}^k is not SNP, then a_{δ_{n+1}}^k is not SNP.

LITERATURE CHECKPOINT

Before attempting the proof, independently update the literature search through the present date. Search exact statements, citation graphs, theses, author publication lists, Vandermonde-power expansions, Dyson-product coefficients, Jack-polynomial expansions, and subsequent SNP literature.

At minimum inspect:

    • Monical–Tokcan–Yong, arXiv:1703.02583.
    • Ballantine, arXiv:1201.4572.
    • Expanded Vandermonde powers, arXiv:1204.6003.
    • Lv–Xin–Zhou, arXiv:0706.1009.
    • Sills, arXiv:1812.05557.
    • Later papers citing Monical–Tokcan–Yong.

An alleged resolution counts only after the full argument has been reconstructed and independently audited. A paper resolving a different Monical–Tokcan–Yong conjecture is not evidence that Conjecture 2.25 has been resolved.

KNOWN FACTS TO VERIFY BEFORE USING

1. Newton polytope:

       Newton(a_{δ_n}^k) = kP_{δ_n},

   where P_{δ_n} is the permutahedron generated by the permutations of δ_n.

2. Majorization test:

   If α↓ is α rearranged into weakly decreasing order, then

       α ∈ kP_{δ_n} ∩ Z^n

   exactly when α is integral,

       ∑_{i=1}^n α_i = k n(n−1)/2,

   and, for every 1 ≤ r < n,

       ∑_{i=1}^r α_i↓
       ≤ k(rn − r(r+1)/2).

3. Propagation:

   Check every detail of Lemma 2.24, including why the coefficient with new exponent kn in x_{n+1} can only arise by choosing the x_{n+1}^k term from every new factor.

4. Computed cases reported in the source:

       N_2 = 5,
       N_4 = 4,
       N_6 = 4,
       N_8 = 3,

   and N_{2j−1}=3 was checked there for 1≤j≤4. Reproduce these cases by exact arithmetic; do not trust undocumented computations.

MANDATORY ODD-POWER REDUCTION

Immediately verify the following observation.

For odd k, let

       α_k = (1,(3k−1)/2,(3k−1)/2).

Prove:

    • α_k is an integer point of kP_{(2,1,0)}.
    • a_{δ_3}^k is alternating.
    • Swapping the two equal exponent coordinates fixes x^{α_k}
      but negates the polynomial.
    • Therefore [x^{α_k}]a_{δ_3}^k=0.

If correct, record the entire odd-k case as closed with N_k=3. Do not continue spending the majority of resources on odd k. Concentrate on k=2m.

EVEN-POWER NORMAL FORM

For k=2m define the Dyson Laurent polynomial

       D_{n,m}(x)
         = ∏_{i≠j}(1−x_i/x_j)^m.

Verify the exact identity

       a_{δ_n}^{2m}
         = (−1)^{m·binom(n,2)}
           (∏_{i=1}^n x_i^{m(n−1)}) D_{n,m}(x).

Consequently, for

       β = α − m(n−1)(1,...,1),

coefficient vanishing in a_{δ_n}^{2m} is equivalent, up to the displayed global sign, to coefficient vanishing in D_{n,m}. Exploit Dyson, Morris, Aomoto, Kadell, Good-recursion, and constant-term machinery where applicable, but do not confuse a formula for selected coefficients with a characterization of the entire support.

COEFFICIENT MODEL

Independently use the following bounded edge-label model.

For every edge i<j of K_n choose

       t_{ij} ∈ {0,1,...,k},

where t_{ij} records how many copies of −x_j are selected from
(x_i−x_j)^k. Then

       α_i
       = ∑_{j>i}(k−t_{ij}) + ∑_{h<i}t_{hi},

and

       [x^α]a_{δ_n}^k
       = ∑_t ∏_{i<j}
           (−1)^{t_{ij}} binom(k,t_{ij}),

where the sum is over all edge labels satisfying the exponent equations.

Search for exponent families for which this signed sum vanishes by:

    • a fixed-point-free sign-reversing involution;
    • an exact hypergeometric identity;
    • a determinant or Pfaffian identity;
    • a root-of-unity filter;
    • representation-theoretic orthogonality;
    • a recurrence with provably zero initial and boundary data;
    • an arithmetic decomposition covering every even k.

INITIAL APPROACH FAMILIES

Launch genuinely independent groups for at least these families:

A. Explicit universal holes
   Search directly for n=n(k) and α(k), possibly depending on the
   residue class or prime factorization of k.

B. Fixed small dimension
   Derive exact coefficient formulas for n=3, n=4, and n=5.
   Classify when these one- or low-dimensional binomial sums vanish.

C. Dyson constant terms
   Translate candidate holes into coefficients of D_{n,m} and test
   whether known generalized Dyson identities force zeros.

D. Signed complete-graph flows
   Interpret edge labels as bounded integer flows and construct
   weight-preserving sign-reversing involutions.

E. Schur and Jack expansions
   Use even-power symmetry, Jack-polynomial identities, and recursive
   Vandermonde expansions. Explicitly convert any conclusion back to
   a monomial coefficient.

F. Induction and embedding
   Investigate induction in m, n, prime powers, and multiplicative
   decompositions of k. Identify exactly what property is preserved.

G. Arithmetic and finite fields
   Use Lucas, Kummer, roots of unity, or Frobenius only when the result
   proves an integer coefficient is exactly zero in characteristic
   zero. Congruence modulo one prime is insufficient.

H. Exact computation and conjecture discovery
   Compute supports and coefficients using integers, not floating
   point. Search for symbolic patterns in missing exponent partitions.
   Produce machine-checkable certificates, then replace finite data
   with a uniform proof.

I. Adversarial falsification
   Try to disprove every candidate general lemma using exact searches
   at the smallest untested k and n.

APPROACH REGISTRY

For every approach family record internally:

    • precise target lemma;
    • parity or residue classes covered;
    • exact hypotheses;
    • strongest proved statement;
    • smallest unresolved instance;
    • smallest counterexample found;
    • exact theorem-strength gap;
    • whether the gap is local, computational, or structural;
    • adversarial audit status.

“Promising,” “likely,” “appears,” “numerically verified,” and
“standard methods should finish” are not acceptable registry states.

A family may be marked complete only after it supplies a lemma that
covers an explicit infinite class of k with every quantifier proved.
A family must be marked blocked when its remaining assertion is
equivalent to the original conjecture or to another unproved theorem
of comparable strength.

MANDATORY FAILURE-MODE AUDIT

Reject any proposed proof containing one of the following defects:

1. It proves only finitely many values of k.
2. It proves only that a coefficient vanishes modulo p.
3. It relies on floating-point cancellation.
4. It finds a zero coefficient without proving the exponent lies in
   kP_{δ_n}.
5. It proves polytope membership by assuming full monomial support.
6. It proves a Schur coefficient is zero and silently treats this as
   a monomial coefficient being zero.
7. It constructs an involution without proving it is defined
   everywhere, fixed-point-free, sign-reversing, and weight-preserving.
8. It invokes a constant-term identity outside its hypotheses.
9. It divides by a quantity that may vanish for some k.
10. It leaves an uncovered congruence class of even k.
11. It invokes Lemma 2.24 without checking the extremal
    x_{n+1}-coefficient argument.
12. It confuses finding some threshold N_k with proving that N_k is
    minimal.
13. It uses a recurrence without proving all boundary values and that
    the recurrence reaches every required parameter.
14. It assumes that a candidate exponent is in the Newton polytope
    merely because its coordinates have the correct total sum.
15. It hides the original conjecture inside phrases such as
    “generic cancellation,” “standard representation theory,” or
    “a routine coefficient calculation.”

ADVERSARIAL REVIEW

Every proposed final proof must be attacked by independent agents
assigned to:

    • verify every quantifier;
    • recompute signs and exponent shifts;
    • check the majorization inequalities;
    • search exact small counterexamples;
    • test all parity and residue-class boundaries;
    • verify all cited theorems from their original statements;
    • detect circular use of SNP;
    • distinguish monomial, Schur, and Jack bases;
    • confirm that characteristic-zero vanishing has actually been
      proved.

If an attack succeeds, return the proof to the registry with the exact
failed line and launch new approaches. Do not patch a failed proof by
assertion.

COMPLETION STANDARD

A surviving proof must explicitly provide, for every k≥1:

    1. A finite integer n(k).
    2. An explicit integer vector α(k), possibly defined casewise.
    3. A proof that α(k) ∈ kP_{δ_{n(k)}}.
    4. A characteristic-zero proof that
       [x^{α(k)}]a_{δ_{n(k)}}^k=0.
    5. A rigorous use of Lemma 2.24 showing non-SNP for every
       n≥n(k).
    6. A check that all positive integers k are covered.

The final response must be a self-contained mathematical proof.
Include definitions and every essential lemma. Computations may guide
discovery but may not replace uniform arguments. Do not return a
research diary, registry, partial theorem, literature report, list of
approaches, or statement that the conjecture remains open.

Return only a complete proof that survives audit.
\end{Verbatim}

\end{document}